%% file: Integral_BC_cohomology_CE_threefolds.tex
\documentclass[a4paper, 11pt]{article}
\usepackage{standalone}
\input{preamble.tex}

\usepackage[margin=1in]{geometry}

\title{Integral Bott--Chern cohomology\\of Generalized Calabi--Eckmann threefolds}
\author{Federico Thiella\footnote{Dipartimento di Matematica e Informatica, Università degli Studi di Firenze}}
\date{}

\begin{document}
\maketitle
\begin{abstract}
    We compute the integral Bott--Chern cohomology of a class of generalized Calabi--Eckmann manifolds of complex dimension \(3\). Although these manifolds are mutually non-biholomorphic, we show that their integral Bott--Chern cohomology rings are nevertheless isomorphic. A cohomological distinction is then obtained via a characteristic map in cohomology, connecting singular, Dolbeault, and integral Bott--Chern cohomologies.
\end{abstract}
\paragraph{MSC2020:} 32Q99, 32J17, 32C35, 14M25
\section*{Introduction}
Calabi--Eckmann manifolds are among the first examples of compact complex manifolds that are neither algebraic nor Kähler \cite{Calabi-Eckmann}. This property arises from their topology: since they are all diffeomorphic to the product of odd dimensional spheres \(S^{2n-1} \times S^{2m-1}\), their second Betti number vanishes. This class includes all diagonal Hopf manifolds \cite{Hirzebruch1951}, and thus constitutes a natural generalization thereof. The generalization also encompasses the holomorphic bundle structure that any Hopf manifold enjoys. Indeed, it is well known that these manifolds are products of \(S^1\) with a Hopf bundle over a projective space. Similarly, Calabi--Eckmann manifolds are total spaces of a holomorphic principal torus bundle over the product of two projective spaces; the complex structure of the fiber identifies two canonical \(S^1\)-bundles, each over one of the components of the base, each of which is a Hopf bundle.

Following the ideas of Haefliger \cite{Haefliger1985}, J.~J.~Loeb and M.~Nicolau generalized the Calabi--Eckmann construction to a whole class of complex structures on the product of two odd dimensional spheres that is closed under small deformations \cite{LoebNicolau1996}. Their construction involves an open subset of \(\C^N\) and a holomorphic vector field \(X\) in its Poincaré domain. Following \cite{Haefliger1985}, under some additional conditions on \(X\), its flow is transverse to a real subvariety of \(\C^N\) diffeomorphic to \(S^{2n-1} \times S^{2m-1}\); hence by transversality, the latter inherits a natural complex structure that can be investigated by means of the originating vector field \(X\).

In this paper we consider a restricted subclass of Loeb--Nicolau's manifolds; namely those that are LVMB manifolds. This is a different class of compact complex manifolds introduced by S.~López de Medrano, A.~Verjovsky \cite{LopezVerjovsky1997} and L.~Meersseman \cite{Meersseman2000}; some of them---which we will refer to as \emph{regular} LVMB manifolds---admit a holomorphic torus bundle structure over a smooth compact toric variety \cite{MeerssemanVerjovsky2004}. Remarkably, the construction of LVMB manifolds can be reformulated in purely combinatorial terms \cite{Bosio2001}, and much of their geometry can be read directly from the combinatorial datum \cite{MeerssemanVerjovsky2004,PanovUstinovsky2012,BattagliaZaffran2015,Thiella2026preprint}. In this framework, Loeb--Nicolau's manifolds that are also LVMBs are precisely those for which \(X\) is a diagonal vector field. Nonetheless, we will further restrict to the manifolds in the intersection of these classes that admit a holomorphic bundle structure over a smooth Hirzebruch surface \(\hirz{q}\): we refer to these objects as \emph{generalized Calabi--Eckmann threefolds}. This choice allows us to exploit the best features of both constructions. Indeed, by Loeb and Nicolau, we know that these are mutually non-biholomorphic complex manifolds, all diffeomorphic to \(S^3 \times S^3\), thus representing a natural generalization of Calabi--Eckmann manifolds. On the other hand, being regular LVMB manifolds, we can retrieve all the information about their bundle structure directly from the associated combinatorial datum \cite{Thiella2026preprint}.

The argument employed by Loeb and Nicolau to show that generalized Calabi--Eckmann threefolds are mutually holomorphically distinct exploits the strong rigidity of their geometry; namely that any biholomorphism between them is necessarily equivariant. Therefore, one expects the same rigidity to be reflected in at least one of the cohomological theories that a complex manifold admits. Interestingly, the answer is negative as long as one considers only Dolbeault, Aeppli or Bott--Chern cohomologies. Indeed, from the general theory of LVMB manifolds, we have a bigraded model for the algebra of forms \cite[Prop.~2.8]{Thiella2026preprint} (see also \cite{PanovUstinovsky2012}); applying this to generalized Calabi--Eckmann threefolds, we compute their Dolbeault, Aeppli and Bott--Chern cohomology. While the first is constant throughout the whole class, Bott--Chern cohomology perceives a difference between the “standard'' Calabi--Eckmann threefold and all the other ones; the latter are not distinguished by this cohomology. Roughly speaking, the reason is that when the integer parameter \(q\) of the Hirzebruch base space is nonzero, a multiple of the singular cocycle generating \(H^{2,2}_\text{BC}\) becomes trivial in Bott--Chern cohomology. This suggests that a cohomological theory refining Bott--Chern cohomology and in which \(q > 0\) is not a unit might detect a torsion cohomology class that thus would be characteristic of each of these manifolds.

This led us to consider the so-called \emph{integral Bott--Chern cohomology}: a cohomological theory introduced by M.~Schweitzer in \cite{Schweitzer2007preprint}, where the author provides a sheaf-theoretic definition of the usual Bott--Chern cohomology. When \(\underline{\C}\) is replaced by \(\underline{\Z}\) as the sheaf of coefficients, one obtains a natural Bott--Chern cohomology with integer coefficients. Combining the soft resolution of the \emph{Bott--Chern complex} provided by \cite{HarveyLawson2008} with a natural exact sequence connecting integral Bott--Chern cohomology with usual Bott--Chern and de Rham cohomologies, we explicitly compute the integral Bott--Chern cohomology groups \(H^{s,t}_\text{BC}(N_q;\Z)\) of any generalized Calabi--Eckmann threefold \(N_q\). In particular, we deduce the following.
\begin{namedtheorem}[\labelcref{integral_BC_are_iso}]
    Let \(N_q,N_{q'}\) be two generalized Calabi--Eckmann threefolds with \(q,q' > 0\). Then, for any \(s,t\)
    \begin{equation*}
        H^{s,t}_\text{BC}(N_q;\Z) \iso H^{s,t}_\text{BC}(N_{q'};\Z).
    \end{equation*}
\end{namedtheorem}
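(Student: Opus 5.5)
The plan is to compute every group \(H^{s,t}_\text{BC}(N_q;\Z)\) explicitly for \(q>0\) and observe that the result does not depend on \(q\). I will work with Schweitzer's definition: \(H^{s,t}_\text{BC}(X;\Z)\) is the hypercohomology of the complex
\[
\mathcal{B}^\bullet_{s,t}:\ \underline{\Z}\to \mathcal O\oplus\bar{\mathcal O}\to \Omega^1\oplus\bar\Omega^1\to\cdots,
\]
truncated at \(\Omega^{s-1}\) and \(\bar\Omega^{t-1}\). The complexified version of this complex computes the usual Bott--Chern groups. Comparing \(\underline{\Z}\) with \(\underline{\C}\) gives a long exact sequence
\[
\cdots\to H^{k-1}(X;\C/\Z)\to H^{s,t}_\text{BC}(X;\Z)\to H^{s,t}_\text{BC}(X;\C)\to\cdots,
\]
or, equivalently, a sequence relating \(H^{s,t}_\text{BC}(X;\Z)\), \(H^{k}(X;\Z)\) and the cohomology of the holomorphic/antiholomorphic truncated complex. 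Here \(k=s+t\). I will compute that truncated part through the Harvey--Lawson soft resolution, using the bigraded model for forms on LVMB manifolds.

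Since \(N_q\) is diffeomorphic to \(S^3\times S^3\), its singular cohomology \(H^\bullet(N_q;\Z)\) is torsion-free and concentrated in degrees \(0,3,6\), independently of \(q\). The Dolbeault groups are constant in \(q\), and the computation sketched in the introduction shows that the complex Bott--Chern groups are the same for all \(q>0\). So the only possible \(q\)-dependence sits in the connecting maps of the exact sequence. The main one is the map \(H^4(N_q;\Z)\to\cdots\) relevant to \(H^{2,2}_\text{BC}\): a multiple of the singular cocycle generating \(H^{2,2}_\text{BC}\) becomes trivial, and the multiplier is \(q\).

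The key step is to show that the extension and cokernel data never produce \(\Z/q\)-torsion. Because \(H^4(N_q;\Z)=0\), the image of the integral lattice in the relevant complex group is zero, or at least lands inside a \(\C\)-vector space where a quotient such as \(\C/q\Z\) is abstractly isomorphic to \(\C/\Z\) via multiplication by \(1/q\). Concretely, I expect each group to take one of the forms \(\Z^a\oplus\C^b\oplus(\C/\Z)^c\oplus(\C^*)^d\), possibly with quotients of \(\C\)-vector spaces by lattices. Full-rank lattices of the same rank in vector spaces of the same dimension give isomorphic quotients, so only the ranks matter.

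Carrying this out degree by degree for \(0\le s,t\le 3\), I will check that every rank is independent of \(q\) for \(q>0\), which yields the isomorphism of the statement. The main obstacle is the \((2,2)\) and \((1,2),(2,1)\) cases. There I must verify that the lattice arising as the image of \(H^3(N_q;\Z)\) in a complex Bott--Chern-type quotient has the same rank for all \(q>0\), for instance rank \(1\) inside a one-dimensional space rather than degenerating. To do this I will use explicit representatives of the generators of \(H^3(S^3\times S^3;\Z)\) in the bigraded model. Any \(q\)-dependence then appears only as a rescaling of a generator, which is absorbed by an abstract group isomorphism; this explains why the groups agree even though the natural maps distinguish them.
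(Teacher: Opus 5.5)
\textbf{Verdict: same strategy as the paper, but the decisive step is missing and your stated reason for it is wrong.}

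Your strategy matches the paper's. You compute each \(H^{s,t}_{\mathrm{BC}}(N_q;\Z)\) from the long exact sequences of \(\mathcal{B}^\bullet_{s,t;\Z}\hookrightarrow\mathcal{B}^\bullet_{s,t;\C}\) and of \((\sigma_s\Omega^\bullet\oplus\sigma_t\overline{\Omega^\bullet})[1]\to\mathcal{B}^\bullet_{s,t;\Z}\to\Z(s)\). You then use that a complex vector space modulo a free subgroup of rank \(r\) depends, as an abstract group, only on the dimension and on \(r\). That fact is correct, and it is exactly the paper's closing remark \(\C/q\Z\iso\C/\Z\iso\C^*\).

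The gap is that the proposal stops where the content begins. That every relevant rank is the same for all \(q>0\) is announced (``I will check''), not shown. The one reason you do give, for the key bidegree \((2,2)\), is wrong.
\begin{itemize}
\item The vanishing of \(H^4(N_q;\Z)\) only says that \(\mathbb{H}^3(N_q,\sigma_2\Omega^\bullet\oplus\sigma_2\overline{\Omega^\bullet})\iso H^{1,2}_{\delbar}(N_q)\oplus H^{2,1}_{\del}(N_q)\iso\C^2\) surjects onto \(H^{2,2}_{\mathrm{BC}}(N_q;\Z)\).
\item What is divided out is \(c(H^3(N_q;\Z(2)))\), which is not zero. For \(q>0\) it has rank \(2\), not rank \(1\) in a line. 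Rank \(1\) is the case \(q=0\), where \(c\) kills the integral class \(\theta_1\wedge w_1+\theta_2\wedge w_4\).
\item For \(q>0\) there is no class generating \(H^{2,2}_{\mathrm{BC}}(N_q;\C)\), since that group is zero.
\item In bidegree \((2,1)\) the group is not a lattice quotient. It is the preimage of \(H^3(N_q;\Z(2))\) under \(\epsilon\colon H^{2,1}_{\mathrm{BC}}(N_q;\C)\to H^3(N_q;\C)\), so what must be shown there is that a complex line meets the lattice only in \(0\).
\end{itemize}

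Both missing facts follow without writing down any lattice.

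\textbf{Bidegree \((2,2)\).} The map \(c\) factors as \(H^3(N_q;\Z(2))\to H^3(N_q;\C)\xrightarrow{c_\C}H^{1,2}_{\delbar}(N_q)\oplus H^{2,1}_{\del}(N_q)\). The same sequence with \(\C\)-coefficients, together with \(H^{2,2}_{\mathrm{BC}}(N_q;\C)=0\), makes \(c_\C\) a surjection between \(2\)-dimensional spaces, hence an isomorphism. So \(H^{2,2}_{\mathrm{BC}}(N_q;\Z)\iso H^3(N_q;\C)/H^3(N_q;\Z(2))\iso(\C^*)^2\) for every \(q>0\).

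\textbf{Bidegree \((2,1)\).} A nonzero multiple \(t[\alpha]_{\mathrm{dR}}\) is real if and only if \([\alpha]_{\mathrm{dR}}\) and \([\bar\alpha]_{\mathrm{dR}}\) are \(\C\)-dependent. This happens if and only if the coefficient of \((q\theta_1+\theta_2)\wedge w_4\) in \([\alpha]_{\mathrm{dR}}\) is real, that is, if and only if \(q=0\). Since \(H^3(N_q;\Z(2))\subset H^3(N_q;\R)\), the group vanishes for all \(q>0\).

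\textbf{Other bidegrees.} If you run your plan in all bidegrees, you will not reproduce the paper's diamond everywhere. The theorem survives, because the discrepancies are independent of \(q\).
\begin{itemize}
\item In total degree \(1\), the relevant term of the sequence is \(\mathbb{H}^0(N_q,\mathcal{B}^\bullet_{1,0;\C})=0\), not \(H^{0,0}_{\mathrm{BC}}(N_q;\C)\). Hence \(H^{1,0}_{\mathrm{BC}}(N_q;\Z)\iso H^0(N_q;\C/\Z(1))\iso\C^*\). Equivalently, \(\Z(1)\to\OO\) is quasi-isomorphic via \(\exp\) to \(\OO^*\) placed in degree \(1\).
\item Since \(H^{3,0}_{\delbar}=H^{3,1}_{\delbar}=H^{0,3}_{\delbar}=0\), the map \(H^3(N_q;\C)\to\mathbb{H}^3(N_q,\sigma_3\Omega^\bullet\oplus\sigma_1\overline{\Omega^\bullet})\) is an isomorphism. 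This gives \(H^{3,1}_{\mathrm{BC}}(N_q;\Z)\iso H^3(N_q;\C)/H^3(N_q;\Z(3))\iso(\C^*)^2\), not \(0\).
\end{itemize}
Your rank-only formulation handles these cases correctly.
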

The reason why these groups fail to distinguish the manifolds \(N_q\) is that, in bidegree \((2,2)\), we have
\begin{equation*}
    H^{2,2}_\text{BC}(N_q;\Z) \iso \frac{\C}{\Z} \oplus \frac{\C}{q\Z}
\end{equation*}
which is isomorphic as a complex abelian Lie group to \(\C^* \times \C^*\) whenever \(q > 0\). Since this is the product of two divisible groups, each of its components contains classes of any positive order; thus the integer \(q\) is not a characteristic constant of \(H^{2,2}_\text{BC}(N_q;\Z)\).

Like usual Bott--Chern cohomology, the integral one also admits a natural multiplicative structure \cite{Schweitzer2007preprint}. Therefore, we have a well-defined integral Bott--Chern cohomology ring \(H^{*,*}_\text{BC}(N_q;\Z)\) admitting a natural ring homomorphism
\begin{equation*}
    H^{*,*}_\text{BC}(N_q;\Z) \longrightarrow H^{*,*}_\text{BC}(N_q;\C),
\end{equation*}
that is not injective in general. Typically, the cohomology algebra is more rigid than the underlying (bi)graded module because its automorphisms also need to preserve the multiplicative structure. However, again due to the divisibility of the middle integral Bott--Chern cohomology groups, this is not the case for generalized Calabi--Eckmann threefolds.
\begin{namedtheorem}[\labelcref{BC_ring_iso_and_algebraa}]
        If both \(q,q' > 0\), there exists a ring isomorphism \(f : H^{*,*}_\text{BC}(N_{q'};\Z) \to H^{*,*}_\text{BC}(N_{q};\Z)\) and a \(\C\)-algebra isomorphism \(f_{\C} : H^{*,*}_\text{BC}(N_{q'};\C) \to H^{*,*}_\text{BC}(N_{q};\C)\) such that the natural square
    \begin{equation*}
        \begin{tikzcd}
            H^{*,*}_\text{BC}(N_{q'};\Z) \ar[r,"f","\iso"'] \ar[d] & H^{*,*}_\text{BC}(N_{q};\Z) \ar[d]\\
            H^{*,*}_\text{BC}(N_{q'};\C) \ar[r,"f_\C","\iso"'] & H^{*,*}_\text{BC}(N_{q};\C)
        \end{tikzcd}
    \end{equation*}
    commutes.
\end{namedtheorem}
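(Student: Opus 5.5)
The plan is to build \(f\) and \(f_\C\) bidegree by bidegree from the explicit computations of \(H^{s,t}_\text{BC}(N_q;\Z)\) and \(H^{s,t}_\text{BC}(N_q;\C)\) obtained before this point, and then check that the products and the comparison maps agree. I would first record the multiplicative structure of \(H^{*,*}_\text{BC}(N_q;\C)\) from the bigraded model \cite[Prop.~2.8]{Thiella2026preprint}. For \(q>0\) this algebra is independent of \(q\), since it does not distinguish the manifolds with nonzero \(q\). So I would fix generators in the model whose products have structure constants not depending on \(q\) (only on whether \(q\neq 0\)), and let \(f_\C\) send generators to generators. This gives a \(\C\)-algebra isomorphism and is the easy part.

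On the integral side I would use the exact sequence
\[
\cdots\to H^{k-1}(N_q;\C)\to H^{s,t}_\text{BC}(N_q;\Z)\to H^{s,t}_\text{BC}(N_q;\C)\oplus H^{k}(N_q;\Z)\to H^k(N_q;\C)\to\cdots
\]
with \(k=s+t\). Since \(N_q\) is diffeomorphic to \(S^3\times S^3\), the only relevant singular groups are in degrees \(0\), \(3\) and \(6\). In most bidegrees the integral group is therefore either a copy of the complex one, a copy of \(\Z\) or \(0\), or \(\C/\Z\)-type quotients as in the \((2,2)\) case. For each bidegree I would write down an explicit presentation of \(H^{s,t}_\text{BC}(N_q;\Z)\) compatible with the natural map to \(H^{s,t}_\text{BC}(N_q;\C)\). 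In the bidegrees where \(q\) does not appear, \(f\) is the identity under these presentations, which are compatible with \(f_\C\). The only \(q\)-dependent piece is
\[
H^{2,2}_\text{BC}(N_q;\Z)\iso \C/\Z\oplus\C/q\Z.
\]
There I would define \(f\) on the second summand as the map \(\C/q'\Z\to\C/q\Z\) induced by multiplication by \(q/q'\). It is an isomorphism of complex Lie groups, and it lifts to the map \(\C\to\C\), \(z\mapsto (q/q')z\).

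For the diagram to commute, \(f_\C\) must then act on \(H^{2,2}_\text{BC}(N;\C)\) by rescaling the corresponding class by \(q/q'\). The natural map \(\C/q\Z\to H^{2,2}_\text{BC}(N_q;\C)\) should be induced by \(z\mapsto z\cdot[\omega]\), possibly with a normalization involving \(q\). So I would adjust \(f_\C\) by an algebra automorphism that rescales the generators responsible for that class, for instance scaling a degree-\((1,1)\) generator whose square or product gives the \((2,2)\) class. I would then verify that this rescaling is still an algebra map. Because the integral groups around it are divisible or copies of \(\C\), the rescaling can be absorbed on the integral side as well.

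Finally I would check multiplicativity of \(f\). The Schweitzer product of integral classes maps to the complex product, and products landing in the torsion-type part are determined by the exact sequence and the cup product on \(H^*(S^3\times S^3;\Z)\). The main obstacle is exactly this step. I must show that every product landing in \(H^{2,2}_\text{BC}(N_q;\Z)\), in particular products involving integral classes coming from \(H^3(N_q;\Z)\) via the connecting maps, is compatible with the rescaling by \(q/q'\). I expect this to hold because \(H^3\cdot H^3\) lands in degree \(6\), and the products into bidegree \((2,2)\) factor through \(H^{*,*}_\text{BC}(N_q;\C)\)-images. That leaves only the verification against the complex structure constants already fixed by \(f_\C\).
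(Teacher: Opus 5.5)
Your proposal has a genuine gap at exactly the step you flag as the main obstacle, and the reduction you propose for it does not hold. For \(q>0\) the paper shows \(H^{2,2}_\text{BC}(N_q;\C)=0\). So there is no complex \((2,2)\)-class for \(f_\C\) to rescale, and commutativity of the square in that bidegree is automatic. More importantly, the cup product \(H^{1,1}_\text{BC}(N_q;\Z)\otimes H^{1,1}_\text{BC}(N_q;\Z)\to H^{2,2}_\text{BC}(N_q;\Z)\) lies entirely in the kernel of the comparison map. It cannot be ``verified against the complex structure constants already fixed by \(f_\C\)'': it is a secondary product, invisible to \(H^{*,*}_\text{BC}(N_q;\C)\), it must be computed in the cone model, and it is nonzero. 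Classes from \(H^3(N_q;\Z)\) enter only as the lattice defining \(H^{2,2}_\text{BC}(N_q;\Z)\), and no class of \(H^{1,1}\) comes from \(H^3\), so your \(H^3\cdot H^3\) remark does not bear on this.

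The paper's proof rests on the relations \(w_1\smile w_4=y\), \(w_1\smile w_1=0\), \(w_4\smile w_4=-qy=0\), with \(y\) generating the \(\C/q\Z\) summand. That is why, in the direction \(N_q\to N_{q'}\), it sets \(f(w_j)=p\,w_j'\) and \(f(y)=p^2y'\) with \(p^2=q'/q\), rather than taking \(f\) to be the identity on \(H^{1,1}\). With your choice (identity on \(H^{1,1}\), multiplication by \(q/q'\) on the \(\C/q'\Z\) summand) you get \(f(w_1'\smile w_4')=\tfrac{q}{q'}\,y\neq y=f(w_1')\smile f(w_4')\). The rescaling you suggest later is motivated by the nonexistent complex \((2,2)\)-class, not by this relation. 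Also, concluding that \(H^{*,*}_\text{BC}(N_q;\C)\) is independent of \(q\) ``since it does not distinguish the manifolds'' is circular. The claim is true, but you must check it by normalizing \(\alpha,\bar\alpha,\nu,\bar\nu\) against \(\alpha\wedge w_1=\bar\nu/(q+i)\), \(\alpha\wedge w_4=\bar\nu/(1-iq)\) and \(\alpha\wedge\bar\alpha=\tfrac{q}{1+q^2}\operatorname{vol}\).

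Be aware that checking relations on generators, as the paper does, does not close this step either. Since \(H^{1,1}_\text{BC}(N_q;\Z)\to H^{1,1}_\text{BC}(N_q;\C)\) is an isomorphism, commutativity of the square forces \(f\) to be \(\C\)-linear on \(H^{1,1}\). So multiplicativity must hold for products of arbitrary complex combinations of \(w_1,w_4\), and multiplication by a non-integer such as \(p^2\) is not defined on \(\C/q\Z\).

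Compute the product using representatives with zero integral part (possible since \(H^2(N_q;\Z)=0\)) and a Deligne-type product \((a,b)\cdot(a',b')=(a\wedge\del a',\,b\wedge\delbar b')\) on the form components. Writing \(\omega=\omega_+(w_1+iw_4)+\omega_-(w_1-iw_4)\), you find that \(\omega\smile\omega'\) is the image of \(\kappa\,(\omega_-\omega'_-[\bar\xi\wedge w_1],\,\pm\omega_+\omega'_+[\xi\wedge w_1])\in H^{1,2}_\delbar\oplus H^{2,1}_\del\). Here \(\kappa\neq0\) is independent of \(q\), and the signs depend on the convention for \(Z\). In the same coordinates, \(c(H^3(N_q;\Z(2)))\) is, up to a real factor, \(\{(z,\pm\bar z): z\in\Z+qi\Z\}\); this is the lattice \(\Gamma_q\) from the proof that \(c\iso c'\) iff \(q=q'\).

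Comparing coefficients of \(\omega_-\omega'_+\) shows that a compatible \(\C\)-linear \(f|_{H^{1,1}}\) must be diagonal or antidiagonal in the basis \(w_1\pm iw_4\). Products of \((1,1)\)-classes exhaust \(H^{2,2}_\text{BC}(N_q;\Z)\), so \(f|_{H^{2,2}}\) is then induced by the matrix of squared entries. That matrix would have to carry \(\Z+q'i\Z\) onto \(\Z+qi\Z\) by a complex homothety, which is impossible for \(q\neq q'\) because the two rectangular lattices have different shapes. As far as I can see, this step cannot be completed, either along your lines or along the paper's.
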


These results show that any cohomological invariant capable of distinguishing these spaces must arise as a map connecting singular cohomology to some other cohomological theories. Such a map naturally comes from the very definition of the integral Bott--Chern complex, in which the sheaf of \(\Z\)-valued locally constant functions is mapped to the sum of Dolbeault and anti-Dolbeault complexes. Thus, for \(q >0\), truncating the corresponding long exact hypercohomology sequence, we obtain the following characteristic short exact sequence
\begin{equation*}
    \begin{tikzcd}
        0 \ar[r] & H^3(N_q;\Z) \ar[r, "c"] & H^{1,2}_{\delbar}(N_q) \oplus H^{2,1}_{\del}(N_q) \ar[r] & H^{2,2}_\text{BC}(N_q;\Z) \ar[r] & 0,
    \end{tikzcd}
\end{equation*}
in which the central term is the sum of Dolbeault and anti-Dolbeault cohomology groups. This sequence provides a cohomological invariant that effectively distinguishes the manifolds \(N_q\). Indeed:
\begin{namedtheorem}[\labelcref{GCE_are_distinct_cohomological}]
    Let \(N_q\) and \(N_{q'}\) be two generalized Calabi--Eckmann threefolds. Then \(c \iso c'\) if and only if \(q = q'\).
\end{namedtheorem}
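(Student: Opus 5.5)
The plan is to make the statement precise first and then reduce it to a lattice-theoretic invariant read off from an explicit formula for \(c\). I take \(c \iso c'\) to mean a commutative square
\begin{equation*}
\begin{tikzcd}
H^3(N_q;\Z) \ar[r,"c"] \ar[d,"\alpha"'] & H^{1,2}_{\delbar}(N_q) \oplus H^{2,1}_{\del}(N_q) \ar[d,"\beta"] \\
H^3(N_{q'};\Z) \ar[r,"c'"] & H^{1,2}_{\delbar}(N_{q'}) \oplus H^{2,1}_{\del}(N_{q'})
\end{tikzcd}
\end{equation*}
with \(\alpha\) a group isomorphism and \(\beta\) a \(\C\)-linear isomorphism. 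I require \(\beta\) to preserve the splitting into Dolbeault and anti-Dolbeault parts and to be compatible with complex conjugation, which exchanges the two summands. Hence \(\beta=(\psi,\bar\psi)\) for a single \(\C\)-linear \(\psi\). This compatibility is essential. Without it, a diagonal rescaling such as \((1,1/q)\) would identify \(\Z\oplus q\Z\) with \(\Z\oplus\Z\) inside \(\C\oplus\C\), and the statement would be false.

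\textbf{Step 1: explicit formula for \(c\).} Using the bigraded model of \cite[Prop.~2.8]{Thiella2026preprint} and the Dolbeault computation done earlier, both \(H^{1,2}_{\delbar}(N_q)\) and \(H^{2,1}_{\del}(N_q)\) are one-dimensional. I choose generators \(\eta\) and \(\bar\eta\) related by conjugation. Next I choose an integral basis \(a,b\) of \(H^3(N_q;\Z)\iso\Z^2\), compatible with \(S^3\times S^3\) and with the bundle structure over \(\hirz{q}\): say \(a\) is the fibre-type class and \(b\) comes from the base. I then compute \(c(a)=(\lambda_a\eta,\overline{\lambda_a}\bar\eta)\) and \(c(b)=(\lambda_b\eta,\overline{\lambda_b}\bar\eta)\) by representing de Rham classes by invariant forms and projecting to their \((1,2)\) and \((2,1)\) components. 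The exact sequence then forces \(\mathrm{coker}\,c\iso\C/\Z\oplus\C/q\Z\). This serves as a consistency check: up to the choice of \(\eta\), the pair \((\lambda_a,\lambda_b)\) should be something like \((1,\,\tau)\), where \(q\) enters through \(\tau\) or through a relation of the form \(q\lambda_a\in\Z\lambda_b+\dots\).

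\textbf{Step 2: extract the invariant.} Under an isomorphism \((\alpha,\psi)\) with \(\psi\) multiplication by some \(\mu\in\C^*\), the subgroup \(\Lambda_q=\{\lambda_a m+\lambda_b n : m,n\in\Z\}\subset\C\) changes only by the homothety \(\Lambda_q\mapsto\mu\Lambda_q\). So the invariant of \(c\) is the homothety class of \(\Lambda_q\), or, if \(\Lambda_q\) is degenerate (real rank one), the commensurability data recording \(q\). The preferred form would be the index \([\Lambda_q^{\mathrm{sat}}:\Lambda_q]\) of \(\Lambda_q\) in its saturation inside \(\R\Lambda_q\), or equivalently the ratio \(\lambda_b/\lambda_a\) modulo \(GL_2(\Z)\)-action. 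I expect Step 1 to give this invariant as \(q\) up to sign, possibly together with a fixed period \(\tau\) coming from the Calabi--Eckmann fibre. That proves the implication ``\(c\iso c'\Rightarrow q=q'\)''. The converse is trivial, since for \(q=q'\) we may take \(\alpha=\mathrm{id}\) and \(\beta=\mathrm{id}\).

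\textbf{Main obstacle.} The hard step is Step 1: computing the actual complex numbers \(\lambda_a,\lambda_b\). This requires matching integral generators of \(H^3(S^3\times S^3;\Z)\), which are topological, with explicit closed invariant \(3\)-forms in the bigraded model, including their normalisation; this is where \(q\) appears through the twisting of the Hirzebruch base. My first attempt would use the torus-bundle structure. I would integrate the candidate forms over explicit integral \(3\)-cycles: a fibre \(S^1\) times a sphere in the base, and the \(S^3\) lying over a rational curve of self-intersection \(\pm q\). Reading off the periods then gives \(\lambda_a,\lambda_b\), after which the argument of Step 2 finishes the proof.
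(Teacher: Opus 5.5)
Your overall strategy matches the paper's: compute \(c\) on an integral basis and compare the image lattices. But both steps that carry the content of the theorem are left as expectations. Step~1 is never carried out, and Step~2 never shows that the invariant separates different \(q\). The candidates you name also do not work as stated:
\begin{itemize}
\item For \(q>0\), \(\Lambda_q\) has full rank in \(\C=\R\Lambda_q\), so ``the index of \(\Lambda_q\) in its saturation'' is meaningless.
\item The cokernel check carries no information, because \(\C/\Z\oplus\C/q\Z\iso(\C/\Z)^2\) for every \(q>0\). This is exactly why the integral Bott--Chern groups fail to distinguish the \(N_q\).
\end{itemize}

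The missing computation is short and needs no integration over cycles, since the integral structure is already built into the Leray--Serre model \(\mathcal{M}_\Z\). There, \(H^3(N_q;\Z)\) has basis \(a=\theta_1\wedge w_1\) and \(b=(q\theta_1+\theta_2)\wedge w_4\), and \(c\) is \((\pi^{1,2},-\pi^{2,1})\) followed by passage to classes. The forms \(\bar\xi\wedge(w_1+iw_4)\) and \(\xi\wedge(w_1-iw_4)\) are, up to sign, \(\delta(\xi\wedge\bar\xi)\) and \(\bar\delta(\xi\wedge\bar\xi)\). Hence \(\bar\xi\wedge w_4\sim i\,\bar\xi\wedge w_1\) and \(\xi\wedge w_4\sim -i\,\xi\wedge w_1\). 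Set \(\eta=\frac12\bar\xi\wedge w_1\) and \(\bar\eta=\frac12\xi\wedge w_1\). Then \(c(a)=(\eta,-\bar\eta)\) and \(c(b)=\bigl((iq-1)\eta,(iq+1)\bar\eta\bigr)\). So \(c(x)=(\lambda\eta,-\bar\lambda\bar\eta)\), with the sign coming from \(Z=(\iota,-\iota)\), and \(\Lambda_q=\Z+(iq-1)\Z=\Z+iq\Z\) up to the real factor \((2\pi i)^2\).

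You then still have to separate the values of \(q\). For \(q=0\), \(c\) has kernel \(\langle a+b\rangle\), while it is injective for \(q>0\); this is invariant under your notion of isomorphism. For \(q,q'>0\) you must show that \(\Z+iq\Z\) and \(\Z+iq'\Z\) are not homothetic. For instance, \(\operatorname{covol}(\Lambda)/\min_{0\neq v\in\Lambda}|v|^2\) is unchanged by \(\Lambda\mapsto\mu\Lambda\), and it equals \(q\) for \(\Lambda_q\) because \(|m+iqn|^2=m^2+q^2n^2\geq 1\).

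The paper's own argument fixes the target basis \(\{\eta,\bar\eta\}\) and compares \(|\det M_{\Gamma_q}|^2=4q^2\). That would not suffice under your broader notion, since \(\beta=(\mu,\bar\mu)\) multiplies \(|\det|\) by \(|\mu|^2\); only a scale-free invariant like the one above works. Completed this way, your version would prove a somewhat stronger statement.

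Your justification of the conjugation condition is slightly off. The image \(\Gamma_q\) is not a product \(\Z\oplus q\Z\) with respect to the Dolbeault/anti-Dolbeault splitting. It spans the totally real plane \(\{(\lambda\eta,-\bar\lambda\bar\eta)\}\), so any splitting-preserving \(\beta\) carrying \(\Gamma_q\) onto \(\Gamma_{q'}\) is automatically of the form \((\mu,\bar\mu)\). What is essential is preserving the splitting: an arbitrary \(\C\)-linear \(\beta\), such as \(M_{\Gamma_{q'}}M_{\Gamma_q}^{-1}\), would identify all the maps \(c\) with \(q>0\).
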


\paragraph{Acknowledgements}
I am deeply grateful to my advisors Daniele Angella and Fiammetta Battaglia for their constant support during the preparation of this paper. Special thanks go to Linda Carnevale for all the ideas she exchanged with me during the years.

The author is supported by the Università degli Studi di Firenze, INdAM--GNSAGA and
PRIN 2022 project “Real and Complex Manifolds: Geometry and holomorphic dynamics” (code
2022AP8HZ9).

\section{Generalized Calabi--Eckmann threefolds}
Calabi--Eckmann manifolds are classical examples of complex structures on the product of two odd dimensional spheres \(S^{2n-1} \times S^{2m-1}\); when \(1 = n < m\), they generalize the standard \(m\)-dimensional Hopf manifold \cite{Calabi-Eckmann}. These manifolds are constructed as follows: for any \(\tau \in \C\) with \(\Im \tau > 0\) we consider the holomorphic action
\begin{align*}
    \alpha : \C \times (\C^n \setminus \{0\}) \times (\C^m \setminus \{0\}) &\longrightarrow (\C^n \setminus \{0\}) \times (\C^m \setminus \{0\})\\
    (t, (\underline{z}, \underline{w})) &\longmapsto ((e^{2\pi i t} z_1, \dots, e^{2\pi i t} z_n), (e^{2\pi i\tau t} w_1, \dots, e^{2\pi i \tau t} w_m)).
\end{align*}
One can readily verify that its orbit space is the Calabi--Eckmann manifold \(M_{n,m;\tau}\) \cite{Calabi-Eckmann,LopezVerjovsky1997}. Furthermore, this admits a second action of the elliptic curve of period \(\tau\) that we denote as \(\cpxTorus{1}_\tau = \C/\left\langle1,\tau\right\rangle\Z\); on classes, this is given as
\begin{align*}
    \beta : \cpxTorus{1}_\tau \times M_{n,m;\tau} &\longrightarrow M_{n,m;\tau}\\
    (v, [(\underline{z},\underline{w})]_{M_{n,m;\tau}}) &\longmapsto [(e^{2\pi i \frac{v}{\tau}} z_1, \dots, e^{2\pi i \frac{v}{\tau}} z_n), \underline{w}]_{M_{n,m;\tau}} = [\underline{z}, (e^{-2\pi i v} w_1, \dots, e^{-2\pi i v} w_m)]_{M_{n,m;\tau}}.
\end{align*}
Since this action is holomorphic and principal, \(M_{n,m;\tau}\) automatically inherits a holomorphic principal \(\cpxTorus{1}_\tau\)-bundle structure over \(\CP^n \times \CP^m\). Indeed, we can check that the bundle map \(\pi : M_{n,m;\tau} \to \CP^n \times \CP^m\) splits, with respect to the lattice of the fiber, into the product of two \(S^1\) bundles, each of which is a Hopf bundle \cite{Calabi-Eckmann}.

A broad generalization of this construction was provided by J.~J.~Loeb and M.~Nicolau in \cite{LoebNicolau1996}. There, they consider the foliation induced by a holomorphic vector field \(X\) on an open subset of \(\C^N\). Under suitable conditions (see \cite[Thm.~1]{LoebNicolau1996}), the leaf space \(S^{n,m}_X\) is a compact complex non-Kähler and non-algebraic manifold diffeomorphic to \(S^{2n-1} \times S^{2m-1}\). This feature sets them as a broad generalization of Calabi--Eckmann manifolds. 
 
While in general the geometry of the manifolds \(S^{n,m}_X\) can be very involved, when \(X\) is diagonal, i.e. in the case \(X\) is of the form
\begin{equation*}
    X = \sum_{j=1}^N \lambda_j z_j \frac{\del}{\del z_j},
\end{equation*}
their description simplifies considerably. Indeed, in this case, these manifolds are examples of LVM manifolds \cite{LopezVerjovsky1997,Meersseman2000}. These manifolds have recently been widely studied by many authors \cite{Bosio2001,MeerssemanVerjovsky2004,Battisti2013,BattagliaZaffran2015,BattistiOeljeklaus2015}, and indeed their investigation has highlighted a richer geometry for these spaces. In particular, the construction of LVM manifolds can be reformulated by means of purely combinatorial terms \cite{Bosio2001}; this actually produces a larger class---the so-called LVMB manifolds---but it also shifts the core of the construction from holomorphic dynamics to combinatorics. Moreover, when the initial datum fulfils a regularity condition, the corresponding LVMB manifold---for this reason called \emph{regular}---admits a holomorphic principal torus bundle structure over a compact smooth toric variety \cite{MeerssemanVerjovsky2004,Battisti2013,BattagliaZaffran2015}. This is the feature of primary interest throughout this paper. Indeed, it has recently been shown that the geometry of the bundle can be read directly from the original combinatorial datum \cite{Thiella2026preprint}; this in particular completely determines the bigraded bidifferential algebra of smooth forms that encodes the full cohomological information of these manifolds.

\subsection{Generalized Calabi--Eckmann threefolds as LVM manifolds}
In general, LVM manifolds of the type described by Loeb and Nicolau may be of any complex dimension; yet, for computational reasons, in this paper we consider only the three-dimensional case, which is the lowest dimension in which these examples are not necessarily Hopf manifolds.

To construct our class, we notice that the product \(\CP^1 \times \CP^1\) is indeed the Hirzebruch surface \(\hirz{0}\) \cite[V\S 4]{BarthHulekPetersVanDeVen2004}; thus the standard Calabi--Eckmann threefold \(M_{1,1;i}\) fitting in the holomorphic torus bundle \(\cpxTorus{1}_i \hookrightarrow M_{1,1;i} \to \CP^1 \times \CP^1\) can be naturally generalized by a regular LVMB manifold \(N_q\) that fits in the bundle \(\cpxTorus{1}_i \hookrightarrow N_q \to \hirz{q}\). We will see below that this manifold is automatically of the type described by Loeb and Nicolau. To construct \(N_q\), we use the approach by Battaglia and Zaffran \cite{BattagliaZaffran2015}, since, from their viewpoint, it is more straightforward to prescribe the base space of such a holomorphic bundle. This is done encoding the datum of a complete smooth fan \(\Sigma\) into a so-called triangulated vector configuration \((V_\Sigma,\calT_\Sigma)\) \cite{BattagliaZaffran2015,BattagliaZaffran2017} and choosing a Gale Transform for it.

In this paper we do not describe this construction in general; for this we refer to \cite{BattagliaZaffran2015,BattagliaZaffran2017} and references therein. Instead, we will directly construct the example we need, following their construction nonetheless. In particular, we will employ the notation of our previous work \cite{Thiella2026preprint}.

We start from the toric construction of Hirzebruch surfaces: consider the rational polyhedral fan \(\Sigma_q\) in \(\R^2\) generated by the vectors
\begin{equation*}
    v_1 = e_1, \quad v_2 = -e_1 + q e_2, \quad v_3 = -e_2, \quad v_4 = e_2
\end{equation*}
as in \cref{HirzebruchFan}. 
\begin{figure}
    \centering
    \begin{tikzpicture}[scale=1]
        \def\r{2}
        
        \begin{scope}
            \fill[blue!50] (0,0) -- (3,0) -- (3,3.5) -- (0,3.5) -- cycle;
            \fill[red!50] (0,0) -- (0,3.5) -- (-1.75,3.5) -- cycle;
            \fill[green!50] (0,0) -- (-1.75,3.5) -- (-3,3.5) -- (-3,-2) -- (0,-2) -- cycle;
            \fill[yellow!50] (0,0) -- (0,-2) -- (3,-2) -- (3,0) -- cycle;
        \end{scope}
        
        \foreach \x in {-2,-1,0,1,2}
        \foreach \y in {-1,0,1,2,3}
        \fill[gray!50] (\x,\y) circle (1pt);
        
        \draw[->, very thick, black] (0,0) -- (1,0) node[anchor=north west] {\footnotesize $v_1 = (1,0)$};
        \draw[->, very thick, black] (0,0) -- (0,1) node[anchor=north west] {\footnotesize $v_4 = (0,1)$};
        \draw[->, very thick, black] (0,0) -- (-1,\r) node[anchor=south east] {\footnotesize $v_2 = (-1,q)$};
        \draw[->, very thick, black] (0,0) -- (0,-1) node[anchor=north east] {\footnotesize $v_3 = (0,-1)$};
        
        \draw[thick, black!60, dashed] (1,0) -- (2.5,0);
        \draw[thick, black!60, dashed] (0,1) -- (0,2.5);
        \draw[thick, black!60, dashed] (-1,\r) -- (-1.4, 2.8);
        \draw[thick, black!60, dashed] (0,-1) -- (0,-1.5);
        
        \fill[black] (0,0) circle (2pt);
    \end{tikzpicture}
    \caption{The fan \(\Sigma_q\) for \(q = 2\)}
    \label{HirzebruchFan}
\end{figure}
This datum is naturally embedded in the triangulated vector configuration \((V,\calT)\) where \(V = (v_1,v_2,v_3,v_4)\) and
\begin{equation*}
    \calT = \{\{1,4\}, \{2,4\}, \{2,3\}, \{1,3\}\}.
\end{equation*}
For the LVMB construction to apply, such a configuration needs to be odd and balanced. The first condition requires that the space of linear relations among the vectors in \(V\) be of dimension \(2m +1 \geq 3\); the second, instead, that \(\sum_{v \in V} v = 0\) \cite{BattagliaZaffran2015}. In our particular case, \((V,\calT)\) acquires these properties after prepending \(v_0 = -q e_2\) to \(V\); henceforth we continue to denote by \(V\) the new configuration of five vectors. Notice that \(\calT\) has not been modified, hence \(v_0\) does not belong to any \(\tau \in \calT\): vectors like this are called \emph{ghost vectors}.

The next step in the construction requires the choice of a Gale dual configuration for \(V\), this construction is described in detail in \cite{LoeraRambauSantos2010}. While the link between a triangulated vector configuration and its Gale dual is deeper, in practice, it amounts to choosing a basis of the space of linear relations among the vectors in \(V\). In the specific case we are taking into account, this is given by the columns of the matrix
\begin{equation*}
    \hat\Lambda^{\R}_q =
    \begin{bmatrix}
        1 & 0 & 0\\
        1 & 1 & 0 \\
        1 & 1 & 0 \\
        1 & q & 1 \\
        1 & 0 & 1
    \end{bmatrix};
\end{equation*}
notice that, as \(V\) is always assumed to be balanced, the first column can be always taken of this form. Thus, considering the rows of \(\hat\Lambda_q\), we obtain a \emph{graded} configuration of five vectors in \(\R^3\); which, in turn, can be identified with a configuration of five points in the affine space \(\aff_\R^2\) when projected onto the affine plane \(H = \{x_1 = 1\}\). To obtain a configuration of complex points \(\Lambda_q\), and consequently the LVMB datum \((\Lambda_q,\mathcal{E})\) we take the period matrix \(\Pi = [1,i]\) and the rows of the matrix
\begin{equation*}
    \Lambda_q = \Lambda^\R_q \trans{\Pi} = 
    \begin{bmatrix}
        0 & 0\\
        1 & 0 \\
        1 & 0 \\
        q & 1 \\
        0 & 1
    \end{bmatrix}
    \begin{bmatrix}
        1\\
        i
    \end{bmatrix}
    =
    \begin{bmatrix}
        0 \\
        1 \\
        1 \\
        q + i \\
        i
    \end{bmatrix}.
\end{equation*}
While \(\Lambda_q\) is a \(5\)-tuple of points in \(\aff^1_\C\) constructed as above, the virtual chamber \(\mathcal{E}\) is obtained by complementing the subsets in \(\calT\) within \(\{0,\dots, 4\}\). Specifically,
\begin{equation*}
    \mathcal{E} = \{\{0,2,3\}, \{0,1,3\}, \{0,1,4\}, \{0,2,4\}\}.
\end{equation*}
The pair \((\Lambda,\mathcal{E})\) we have constructed determines a unique LVMB manifold as follows: the virtual chamber \(\mathcal{E}\) determines an open subset
\begin{equation*}
    U = \bigcup_{\varepsilon \in \mathcal{E}} \bigcap_{j \in \varepsilon} D(z_j),
\end{equation*}
where \(D(z_j) = \{\underline{z} \in \C^5 : z_j \neq 0\}\) are the distinguished open subsets of the Zariski topology of \(\C^5\). In the particular case we are describing, \(U = (\C^* \times \C^2\setminus \{0\}) \times (\C^2\setminus \{0\})\). On it, we consider the \(\C\)-action
\begin{equation}
    \label{eq:alpha}
    \begin{aligned}
        \alpha : \C \times \proj(U) &\longrightarrow \proj(U)\\
        (t, (z_0 : \dots z_4)) &\longmapsto (z_0 : e^{2\pi i t} z_1 : e^{2\pi i t} z_2 : e^{2\pi i (q+i) t} z_3 : e^{- 2\pi t} z_4)
    \end{aligned}
\end{equation}
whose orbit space \(N_q\) gets a natural structure of compact complex manifold of dimension \(3\) \cite[Prop.~2.1]{BattagliaZaffran2015}. As the original triangulated vector configuration encodes the fan of the Hirzebruch surface \(\hirz{q}\), \(N_q\) is the total space of a holomorphic torus bundle \(\pi : N_q \to \hirz{q}\) \cite{BattagliaZaffran2015,Thiella2026preprint}. In particular, for \(q = 0\) we recover the standard Calabi--Eckmann threefold \(M_{1,1;i} \to \CP^1 \times \CP^1\).

The Loeb--Nicolau construction, instead, can be recognized by observing that \(\alpha\) coincides with the translation along the flow of the vector field
\begin{equation*}
    X_q = \sum_{j=1}^{4} (\Lambda_q)_j z_j  \frac{\del}{\del z_j}
\end{equation*}
acting on \(\proj(U) \iso (\C^2 \setminus \{0\})_{z_1,z_2} \times (\C^2 \setminus \{0\})_{z_3,z_4}\). Note that, to be consistent with the indices appearing in \cref{eq:alpha}, we are considering the list \(\Lambda_q = ((\Lambda_q)_0, \dots, (\Lambda_q)_4)\) indexed from 0 to 4. In particular, \((\Lambda_q)_0\) is dropped when passing to the projectivization of \(U\).
\begin{definition}
    We call \emph{generalized Calabi--Eckmann} threefold of parameter \(q \in \mathbb{N}\) the LVMB manifold \(N_q\) constructed as above. The natural projection \(\pi : N_q \to \hirz{q}\) is a holomorphic principal \(\cpxTorus{1}\)-bundle.
\end{definition}
\begin{remark}
    According to Definition 1 in \cite{Thiella2026preprint}, all the LVMB manifolds \(N_q\) are regular. This is because the initial fan datum \(\Sigma_q\) is smooth for every \(q \in \mathbb{N}\).
\end{remark}

Since the manifolds \(N_q\) are regular LVMB manifolds, we can apply \cite[Thm.~1]{Thiella2026preprint} to compute the characteristic class of the torus bundle \(\pi\). From that result it follows that this invariant can be read off from any left inverse of the matrix \(\hat\Lambda^\R_q\); a possible choice is the matrix 
\begin{equation*}
    \begin{bmatrix}
        1 & 0 & 0 & 0 & 0\\
        -1 & 1 & 0 & 0 & 0\\
        -1 & 0 & 0 & 0 & 1
    \end{bmatrix}.
\end{equation*}
After quotienting out the span of the first row (cf.~\cite[Thm.~1]{Thiella2026preprint}), we obtain the characteristic class of the torus bundle
\begin{equation*}
    \gamma(\pi : N_q \to \hirz{q}) = w_1 \otimes f_1 + w_4 \otimes f_2 \in H^2(\hirz{q};\Z) \otimes H_1(\cpxTorus{1};\Z);
\end{equation*}
where \(f_1,f_2\) are the generators of \(H_1(\cpxTorus{1};\Z)\). On the other hand, as we showed in general in \cite{Thiella2026preprint}, \(w_1,w_4\) are the cohomology classes that are Poincaré-dual to the divisors of \(\hirz{q}\) corresponding to the rays \(\R_{\geq 0} v_1, \R_{\geq 0} v_4\) in the fan \(\Sigma_q\). As a standard fact in toric geometry, these classes generate the whole Dolbeault cohomology ring of \(\hirz{q}\), in particular, it can be presented as
\begin{equation*}
    H^{*,*}(\hirz{q}) \iso \frac{\C[w_1,w_4]}{w_1^2, w_4^2+ q w_1 w_4},
\end{equation*}
where \(w_1, w_4 \in H^{1,1}(\hirz{q}) \cap H^2(\hirz{q};\Z)\). From the topological viewpoint, all the manifolds \(N_q\) are diffeomorphic to \(S^3 \times S^3\). This already emerges from Loeb and Nicolau's construction \cite{LoebNicolau1996} since their manifolds are explicitly obtained as complex structures on the product of two odd dimensional spheres. However, we can also deduce this from the expression of \(\gamma(\pi)\). Indeed, this characteristic class can be naturally identified with an isomorphism in cohomology \(\gamma(\pi) : H^1(\cpxTorus{1};\Z) \to H^2(\hirz{q};\Z)\) sending \(f_1 \mapsto w_1\) and \(f_2 \mapsto w_4\). Therefore, by \cite[\S13.2]{Hofer1993}, \(N_q\) is diffeomorphic to \(S^3 \times S^3\) for any \(q \in \mathbb{N}\). This behavior differs from that of Hirzebruch surfaces: indeed \(\hirz{q} \approx_\infty S^2 \times S^2\) when \(q\) is even, and \(\hirz{q} \approx_\infty \CP^2 \# \overline{\CP^2}\) for \(q\) odd \cite{Hirzebruch1951}.

\subsection{\texorpdfstring{Cohomology of Generalized Calabi--Eckmann threefolds}{Cohomology of Generalized Calabi--Eckmann threefolds}}
\label{cohomology_GCE}
It is well known that the intersection form of divisors of a Hirzebruch surface \(\hirz{q}\) is given by the matrix
\begin{equation}
    \label{hirz_q_intersection_matrix}
    M_q =
    \begin{bmatrix}
        0 & 1\\
        1 & -q
    \end{bmatrix}.
\end{equation}
Thus, \(\operatorname{tr}(M_q) = -q\) is a biholomorphic invariant of \(\hirz{q}\), whence these are all biholomorphically distinct. A fortiori, this obstructs the existence of any equivariant biholomorphism \(N_{q'} \to N_q\) if \(q' \neq q\). Nevertheless, by \cite[Prop.~18]{LoebNicolau1996}, any biholomorphism between these manifolds descends from a biholomorphism of \(\proj(U)\) sending the vector field \(X_{q'}\) to \(X_q\); hence
\begin{proposition}
    \(N_q \iso N_{q'}\) if and only if \(q = q' \in \mathbb{N}\).
\end{proposition}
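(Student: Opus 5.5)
The plan has two directions. The ``if'' direction is trivial: for \(q = q'\) the two manifolds are literally the same orbit space \(N_q\). All the work is in the ``only if'' direction. Suppose \(\Phi : N_{q'} \to N_q\) is a biholomorphism. We apply \cite[Prop.~18]{LoebNicolau1996}, which is taken as known from the discussion above. It says that \(\Phi\) lifts to a biholomorphism \(\tilde\Phi\) of \(\proj(U) \iso (\C^2\setminus\{0\}) \times (\C^2\setminus\{0\})\) with \(\tilde\Phi_* X_{q'} = X_q\), possibly up to the admissible rescaling of the field allowed in Loeb--Nicolau's statement.

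From this lift we extract an equivariant biholomorphism of torus bundles.

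\begin{itemize}
\item The \(\C\)-action \(\alpha\) of \cref{eq:alpha} is the flow of \(X_q\). The second action, which yields the \(\cpxTorus{1}_i\)-bundle structure, comes from the remaining diagonal \(\C^*\)-directions commuting with \(X_q\). For \(\Lambda_q = (0,1,1,q+i,i)\), the linear vector fields commuting with \(X_q\) are diagonal, except for the block \(z_1,z_2\), which have equal eigenvalue.
\item \(\tilde\Phi\) conjugates the centralizers of \(X_{q'}\) and \(X_q\). So it conjugates the whole holomorphic torus actions defining the principal bundles \(N_{q'} \to \hirz{q'}\) and \(N_q \to \hirz{q}\), up to an automorphism of the acting group.
\item Hence \(\Phi\) is equivariant up to an automorphism of the fiber and descends to a biholomorphism \(\hirz{q'} \to \hirz{q}\) of the bases.
\end{itemize}

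Passing to the quotient by the full torus, we then use the intersection form \eqref{hirz_q_intersection_matrix}. The base biholomorphism induces an isomorphism \(H^2(\hirz{q'};\Z) \to H^2(\hirz{q};\Z)\) preserving intersection forms. The invariant \(\operatorname{tr}(M_q) = -q\) cited in the text is not literally an isometry invariant, since the trace of a Gram matrix depends on the basis. So we would instead use the classical fact that \(\hirz{q}\) is characterized by the minimal self-intersection \(-q\) of an irreducible curve, namely the negative section. Alternatively we could compare the canonical class, noting \(K^2 = 8\) for all \(q\), so that is not enough. The curve of negative self-intersection is preserved by biholomorphisms, so \(q = q'\). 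For \(q = 0\) there is no negative curve, which distinguishes \(\hirz{0}\) from all \(\hirz{q}\) with \(q > 0\).

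The main obstacle is the middle step: checking carefully that the lift provided by Loeb--Nicolau intertwines not just the \(\C\)-flows but the full torus actions, so that \(\Phi\) really descends to the Hirzebruch bases. We would first try to argue via centralizers in the Lie algebra of linear vector fields: since \(\tilde\Phi\) conjugates \(X_{q'}\) to \(X_q\), it conjugates their centralizers. It then suffices that the bundle-defining torus is intrinsically determined, for instance as the maximal compact torus in that centralizer acting freely with \(\C\)-orbits closed. If this is awkward, a fallback is to compare eigenvalues directly. A linear conjugacy \(X_{q'} \mapsto X_q\) forces the spectra \(\{1,1,q'+i,i\}\) and \(\{1,1,q+i,i\}\) to agree up to a common scalar. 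The repeated eigenvalue \(1\) must correspond to itself, so the scalar is \(1\), and then \(q'+i = q+i\), giving \(q = q'\) directly.
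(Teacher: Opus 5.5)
Your main line is the paper's own argument. The paper also invokes \cite[Prop.~18]{LoebNicolau1996} to lift a biholomorphism \(N_{q'}\to N_q\) to a biholomorphism of \(\proj(U)\) carrying \(X_{q'}\) to \(X_q\). It then concludes from the fact that the bases \(\hirz{q}\) are pairwise non-biholomorphic, and leaves implicit the step from ``conjugates the vector fields'' to ``equivariant, hence descends to the bases'', which you correctly single out as the delicate point.

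Where you deviate, you improve on the paper. Your objection to \(\operatorname{tr}(M_q)\) is right: the integral intersection lattice of \(\hirz{q}\) only sees the parity of \(q\). For \(q=2k\), replacing \(w_4\) by \(w_4+kw_1\) turns \(M_q\) into the hyperbolic matrix of trace \(0\). So the paper's stated invariant does not by itself separate the \(\hirz{q}\); the unique irreducible curve of self-intersection \(-q\), and its absence for \(q=0\), is the correct justification. Your eigenvalue fallback is a genuinely different and shorter route. It needs neither the descent to the base nor the classification of Hirzebruch surfaces, only the lift itself, and it makes the centralizer discussion unnecessary.

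To make the fallback airtight, you need to justify the word ``linear'', because the lift need not be linear. For \(q>0\), the map \((z_1,z_2,z_3,z_4)\mapsto(z_1,z_2,z_3+z_1^q z_4,z_4)\) preserves \(\proj(U)\) and commutes with \(X_q\), since \(q+i=q\cdot 1+i\) is a resonance. The fix is to linearize at the origin. A biholomorphism of \((\C^2\setminus\{0\})\times(\C^2\setminus\{0\})\) extends by Hartogs to an automorphism of \(\C^4\) preserving the union of the two coordinate planes, so it fixes their only singular point \(0\). Its differential at \(0\) then conjugates \(X_{q'}\) to \(cX_q\), which is exactly the linear conjugacy you use. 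This also covers a lift sending \(X_{q'}\) to \(gX_q\) with \(g\) nowhere vanishing: \(g\) extends across the codimension-two set without zeros, and you take \(c=g(0)\).

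One more detail: for \(q=0\) the spectrum \(\{1,1,i,i\}\) has two repeated eigenvalues, so ``the repeated eigenvalue \(1\) corresponds to itself'' is not the right argument there. What separates \(q=0\) from \(q'>0\) is the multiplicity pattern \((2,2)\) versus \((2,1,1)\). For \(q,q'>0\) your argument works as written.
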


Since the manifolds \(N_q\) are holomorphically distinct, we can still ask which cohomological theory---if any---perceives the differences in their complex structures; of course this is not de Rham cohomology, since the manifolds \(N_q\) are all diffeomorphic. As they are non-Kähler compact complex manifolds \cite{LoebNicolau1996,Meersseman2000}, there are basically two cohomology theories to consider: Dolbeault cohomology and Bott--Chern cohomology. These are respectively defined as
\begin{equation*}
    H^{s,t}_\delbar(N_q) = \frac{\ker(\delbar : \mathcal{A}^{s,t}_{N_q} \to \mathcal{A}^{s,t+1}_{N_q})}{\Ima (\delbar : \mathcal{A}^{s,t-1}_{N_q} \to \mathcal{A}^{s,t}_{N_q})}, \quad H^{s,t}_\text{BC}(N_q) = \frac{\ker(d : \mathcal{A}^{s,t}_{N_q} \to \mathcal{A}^{s+1,t}_{N_q} \oplus \mathcal{A}^{s,t+1}_{N_q})}{\Ima (\del\delbar : \mathcal{A}^{s-1,t-1}_{N_q} \to \mathcal{A}^{s,t}_{N_q})},
\end{equation*}
where \(\mathcal{A}^{s,t}_{N_q}\) is the vector space of \(\C\)-valued differential forms on \(N_q\) of type \((s,t)\).
\begin{remark}
    There also exist anti-Dolbeault and Aeppli cohomologies. The first is isomorphic to Dolbeault up to conjugation: namely \(H^{s,t}_\del(N_q) \iso \overline{H^{t,s}_\delbar(N_q)}\). Aeppli cohomology, instead, is defined in bidegree \((s,t)\) as
    \begin{equation*}
        H^{s,t}_\text{Ae}(N_q) = \frac{\ker(\del\delbar : \mathcal{A}^{s,t}_{N_q} \to \mathcal{A}^{s+1,t+1}_{N_q})}{\Ima (\del: \mathcal{A}^{s-1,t}_{N_q} \to \mathcal{A}^{s,t}_{N_q}) + \Ima (\delbar: \mathcal{A}^{s,t-1}_{N_q} \to \mathcal{A}^{s,t}_{N_q}) }.
    \end{equation*}
    By \cite{Schweitzer2007preprint}, in the compact case the antilinear Hodge-\(\star\)-operator induces an isomorphism
    \begin{equation*}
        H^{s,t}_\text{Ae}(N_q) \iso \overline{H^{3-s,3-t}_\text{BC}(N_q)}.
    \end{equation*}
    Therefore, anti-Dolbeault and Aeppli cohomologies do not carry more information about \(N_q\) than Dolbeault and Bott--Chern cohomologies.
\end{remark}

To compute these cohomologies for the manifolds \(N_q\) we use the model of \(\mathcal{A}^{s,t}_{N_q}\) provided by \cite[Prop.~2.8]{Thiella2026preprint}. We have a quasi-isomorphism of bigraded bidifferential algebras
\begin{equation}
    \label{the_model}
    (\mathcal{A}^{\bullet,\bullet}_{N_q},\del,\delbar) \simeq (\Lambda[\xi,\bar\xi] \otimes H^{*,*}_{\delbar}(\hirz{q}), \bar \delta, \delta) \nfd (\mathcal{M}^{\bullet,\bullet},\bar\delta, \delta)
\end{equation}
with \(\delta\xi = w_1+ i w_4\), \(\delta\bar\xi = 0\) and \(\delta|_{H^{*,*}_{\delbar}(\hirz{q})} =0\). Notice that, to be consistent with \cite{Thiella2026preprint}, under this quasi-isomorphism \(\delbar\) corresponds to \(\delta\).
\begin{figure}[p]
    \thisfloatpagestyle{empty}
    \begin{subfigure}{\linewidth}
        \makebox[\linewidth][c]{
            \begin{tikzpicture}[
                scale=1,
                >=stealth,
                dot/.style={circle, fill, inner sep=1.6pt},
                every label/.style={fill=white,inner sep=1pt},
                ]
                \useasboundingbox (0,0) rectangle (8,8);
                \draw[thick,step=2] (0,0) grid (8,8);
                
                \node[dot, label=below:$1$] at (1, 1) {};
                
                \node[dot, label=right:$\xi$] (xi) at (3.0, 1) {};
                \node[dot, label=left:$w_1 + i w_4$] (w1_minus) at (3.0, 2.5) {};
                \draw[thick] (xi) -- (w1_minus);
                
                \node[dot, label=left:$\bar{\xi}$] (xi_bar) at (1, 3.0) {};
                \node[dot, label=right:$w_1 - i w_4$] (w1_plus) at (2.5, 3.0) {};
                \draw[thick] (xi_bar) -- (w1_plus);
                
                \node[dot, label=right:$\alpha$] at (5, 2.5) {};
                
                \node[dot, label= left:{$\xi \wedge\bar{\xi}$}] (sq_bl) at (3.5, 3.5) {};
                \node[dot, label=below right:{$\xi\wedge(w_1 - i w_4)$}] (sq_br) at (4.5, 3.5) {};
                \node[dot, label=right:{$-q w_1\wedge w_4$}] (sq_tr) at (4.5, 4.5) {};
                \node[dot, label=left:{$\bar{\xi}\wedge(w_1 + i w_4)$}] (sq_tl) at (3.5, 4.5) {};
                \draw[thick] (sq_bl) -- (sq_br) -- (sq_tr) -- (sq_tl) -- (sq_bl);
                
                \node[dot, label=right:$\bar{\alpha}$] (alpha_bar_right) at (3, 5.5) {};
                
                \node[dot, label=left:$\beta$] (beta) at (5, 5.5) {};
                \node[dot, label=left:{$\nu$}] (beta_top) at (5, 7) {};
                \draw[thick] (beta) -- (beta_top);
                
                \node[dot, label=left:$\bar{\beta}$] (beta_bar) at (5.5, 5) {};
                \node[dot, label=right:{$\bar\nu$}] (beta_bar_right) at (6.5, 5) {};
                \draw[thick] (beta_bar) -- (beta_bar_right);
                
                \node[dot, label=right:{$\operatorname{vol}$}] at (7.0, 7) {};
                
                \node[label=right:$\bar \delta$] at (8,0) {};
                \node[label=above:$\delta$] at (0,8) {};
                
            \end{tikzpicture}
        }
        \caption{\(q > 0\)}
        \label{fig:the_model_q_not_0}
    \end{subfigure}
    \vfill
    \begin{subfigure}{\linewidth}
        \makebox[\linewidth][c]{
            \begin{tikzpicture}[
                scale=1,
                >=stealth,
                dot/.style={circle, fill, inner sep=1.6pt},
                every label/.style={fill=white,inner sep=1pt},
                ]
                \useasboundingbox (0,0) rectangle (8,8.5);
                \draw[thick,step=2] (0,0) grid (8,8);
                
                \node[dot, label=below:$1$] at (1, 1) {};
                
                \node[dot, label=right:$\xi$] (xi) at (3.5, 1) {};
                \node[dot, label=right:$w_1 + i w_4$] (w1_minus) at (3.5, 2.5) {};
                \draw[thick] (xi) -- (w1_minus);
                
                \node[dot, label=left:$\bar{\xi}$] (xi_bar) at (1, 2.5) {};
                \node[dot, label = below:$w_1 - i w_4$] (w1_plus) at (2.5, 2.5) {};
                \draw[thick] (xi_bar) -- (w1_plus);
                
                \node[dot, label=right:$\alpha$] (alpha) at (5, 3) {};
                
                \node[dot, label= above left:{$\xi \wedge\bar{\xi}$}] (xi_xibar) at (3, 3) {};
                \node[dot, label= right:{$\xi\wedge w_1$}] (xi_wedge) at (4.5, 3.5) {};
                \node[dot, label=right:{$w_1\wedge w_4$}] (w_1w_2) at (4.5, 4.5) {};
                
                \node[dot, label=right:$\bar{\alpha}$] (alpha_bar) at (3, 5.5) {};
                
                \draw[thick] (xi_xibar) -- (alpha_bar);
                \draw[thick] (xi_xibar) -- (alpha);
                \node[dot, label=left:{$\bar{\xi}\wedge w_1$}] (xibar_wedge) at (3.5, 4.5) {};
                \draw[thick] (xi_wedge) -- (w_1w_2);
                \draw[thick] (xibar_wedge) -- (w_1w_2);

                \node[dot, label=left:$\beta$] (beta) at (5, 5.5) {};
                \node[dot, label=left:{$\nu$}] (beta_top) at (5, 7) {};
                \draw[thick] (beta) -- (beta_top);
                
                \node[dot, label=left:$\bar{\beta}$] (beta_bar) at (5.5, 5) {};
                \node[dot, label=right:{$\bar\nu$}] (beta_bar_right) at (6.5, 5) {};
                \draw[thick] (beta_bar) -- (beta_bar_right);
                
                \node[dot, label=right:{$\operatorname{vol}$}] at (7.0, 7) {};
                
                \node[label=right:$\bar \delta$] at (8,0) {};
                \node[label=above:$\delta$] at (0,8) {};
                
            \end{tikzpicture}
        }
        \caption{\(q = 0\)}
        \label{fig:the_model_q_0}
    \end{subfigure}
    \caption{A representation of \((\Lambda[\xi,\bar\xi] \otimes H^{*,*}_{\delbar}(\hirz{q}), \bar\delta,\delta)\) at the varying of \(q\)}
    \label{fig:the_model}
\end{figure}
\Cref{fig:the_model} provides a graphical representation of the bigraded bidifferential structure of \((\mathcal{M}^{\bullet,\bullet},\bar\delta,\delta)\), with the generators given as:
\begin{alignat*}{3}
    \alpha &= \xi \wedge \left(w_1 - \frac{i}{1-iq} w_4\right), &\quad \beta &= \frac{1}{2} \xi \wedge \bar \xi \wedge \big((q-i) w_1 + w_4\big),\\
    \nu &= \bar\xi \wedge w_1 \wedge w_4, &\quad  \operatorname{vol} &= \frac{i}{2} \xi\wedge \bar \xi \wedge w_1 \wedge w_4.
\end{alignat*}

By construction, the cohomology of \((\mathcal{M}^{s,\bullet},\delta)\) computes the Dolbeault cohomology of \(N_q\); in our diagram, this corresponds to considering only the vertical direction: the non-trivial Dolbeault classes are indeed represented by the dots that are not endpoints of any vertical segment. We thus obtain the Dolbeault cohomology groups generated as follows
\begin{equation*}
    H^{\bullet,\bullet}_{\delbar}(N_q) =
    \begin{array}{ccccccc}
        &&&1\\
        && 0 & & \bar\xi\\
        & 0 && w_1 && 0\\
        0 & & \alpha & & \bar\alpha&&0\\
        &0 && \beta && 0\\
        && \bar\nu&& 0\\
        &&& \operatorname{vol}
    \end{array}
\end{equation*}
where the \((s+t)\)-th row is sorted as \(H^{s+t,0}_\delbar, H^{s+t-1,1}_\delbar, \dots, H^{0,s+t}_\delbar\). In particular \(H^{*,*}_{\delbar}(N_q)\) does not depend on \(q\), as proven in general in \cite[\S13.2]{Hofer1993}.
\subsubsection{De Rham cohomology}
In the diagrams of \cref{fig:the_model} we can also recognize the generators of the de Rham cohomology of \(N_q\). Directly from the model \((\mathcal{M}^{\bullet,\bullet},\bar\delta,\delta)\), we already know that nontrivial de Rham cohomology groups are
\begin{equation*}
    H^0_\text{dR}(N_q) \iso \R\left\langle 1 \right\rangle,\qquad H^3_\text{dR}(N_q) \iso \R\left\langle\theta_1 \wedge w_1, (q \theta_1 + \theta_2) \wedge w_4\right\rangle, \qquad H^6(N_q)_\text{dR} \iso \R\left\langle \operatorname{vol} \right\rangle,
\end{equation*}
with \(\theta_1 = \frac{1}{2}(\xi + \bar\xi)\), \(\theta_2 = -\frac{i}{2}(\xi - \bar\xi)\) and
\begin{equation*}
    \operatorname{vol} = \theta_1 \wedge \theta_2 \wedge w_1 \wedge w_4 = \frac{i}{2} \xi\wedge \bar \xi \wedge w_1 \wedge w_4.
\end{equation*}
Since \(\theta_1 \wedge w_4 - \theta_2 \wedge w_1\) vanishes in \(H^3_\text{dR}(N_q)\), the de Rham classes \([\alpha]_\text{dR}\) and \([\bar\alpha]_\text{dR}\) can be written as
\begin{equation}
    \label{expressions_of_alpha}
    \begin{aligned}
        [\alpha]_\text{dR} &= \theta_1 \wedge w_1 + \frac{2 + iq}{2(q^2 + 1)} (q \theta_1 + \theta_2) \wedge w_4\\
        [\bar\alpha]_\text{dR} &= \theta_1 \wedge w_1 + \frac{2 - iq}{2(q^2 + 1)} (q \theta_1 + \theta_2) \wedge w_4,
    \end{aligned}
\end{equation}
which generate \(H^3_\text{dR}(N_q)\otimes \C\) when \(q > 0\).
This is no longer true for \(q = 0\), since in this case \([\alpha]_\text{dR} = [\bar\alpha]_\text{dR}\); a second independent generator of \(H^3_\text{dR}(N_0)\otimes\C\) is then provided by \((\xi + \bar\xi) \wedge w_1\). Note that this is reflected in \cref{fig:the_model_q_0}, where \(\alpha\) and \(\bar\alpha\) become part of a \emph{zigzag}.
\begin{remark}
    \label{real_cohomology_model}
    Applying the Leray--Serre spectral sequence directly to \(\cpxTorus{1} \hookrightarrow N_q \to \hirz{q}\), we obtain a model \((\mathcal{M}_\Z^\bullet,d_{\mathcal{M}})\) for the singular cohomology of \(N_q\). Namely, it is given by
    \begin{equation*}
        (\mathcal{M}_\Z^\bullet, d_{\mathcal{M}}) \dfn (\Lambda[\theta_1, \theta_2] \otimes H^*(\hirz{q};\Z), d)
    \end{equation*}
    with \(\theta_1 = \frac{1}{2}(\xi + \bar\xi)\), \(\theta_2 = -\frac{i}{2} (\xi-\bar\xi)\) and \(d\theta_1 = w_1\), \(d\theta_2 = w_4\). Notice that tensoring this with \(\C\) and taking the projection onto bidegrees provides the bigraded model \(\mathcal{M}^{\bullet,\bullet}\), and \(\mathcal{M}_\Z^\bullet\) is then the restriction to integral classes.

    In particular, the classes \(\theta_1 \wedge w_1\) and \((q \theta_1 + \theta_2) \wedge w_4\) are integral; and for \(q > 0\) we can rewrite the generators of \(H^3(N_q;\Z)\) as
    \begin{equation*}
        H^3(N_q;\Z) \iso \left\langle \Re\left( \frac{q - 2i}{q} \alpha\right), \Im\left(2\frac{q^2 +1}{q} \alpha\right)\right\rangle.
    \end{equation*}
\end{remark}

With a similar technique, we can compute the Bott--Chern cohomology. For this we have to check the zigzags in \cref{fig:the_model}, of which the outer vertices provide nontrivial Bott--Chern classes. This yields

\begin{equation*}
    \begin{array}{r@{\quad}ccccccc}
        &&&& 1 &&& \\
        &&& 0 && 0 && \\
        && 0 && w_1, w_4 && 0 & \\
        H^{\bullet,\bullet}_{\text{BC}}(N_q) = & 0 && \alpha && \bar{\alpha} && 0 \\
        && 0 && w_1 \wedge w_4 && 0 & \\
        &&& \bar{\nu} && \nu && \\
        &&&& \text{vol} &&& \\[2ex] %
        &&&& q = 0 &&&
    \end{array}
    \quad %
    \begin{array}{r@{\quad}ccccccc}
        &&&& 1 &&& \\
        &&& 0 && 0 && \\
        && 0 && w_1, w_4 && 0 & \\
        H^{\bullet,\bullet}_{\text{BC}}(N_q) = & 0 && \alpha && \bar{\alpha} && 0 \\
        && 0 && 0 && 0 & \\
        &&& \bar{\nu} && \nu && \\
        &&&& \text{vol} &&& \\[2ex] 
        &&&& q > 0 &&&
    \end{array}
\end{equation*}
in particular \(h^{2,2}_\text{BC} = 1\) if \(q = 0\), and \(h^{2,2}_\text{BC} = 0\) for every \(q > 0\). This can already be  observed in \cref{fig:the_model}: indeed, while at the center of \cref{fig:the_model_q_0}  we have a zig-zag with \(w_1 \wedge w_4\) as outer vertex, in \cref{fig:the_model_q_not_0} its nontrivial multiple \(- q w_1 \wedge w_4\) becomes part of a square: consequently, it no longer contributes to cohomology. Roughly speaking, this suggests that, for \(q > 0\), \(w_1 \wedge w_4\) defines a torsion class of order \(q\), However, as we will see, this intuition can be misleading; making it precise requires a refined version of Bott--Chern cohomology with coefficients in a ring where \(q\) is not a unit.

\section{Integral Bott--Chern cohomology}
In \cite{Schweitzer2007preprint}, Bott--Chern cohomology is defined as the hypercohomology of a complex of sheaves. There, for any complex manifold \(X\), the integral Bott--Chern complex is defined as follows:
\begin{equation*}
    \mathcal{B}_{s,t;\Z}^\bullet :
    \begin{cases}
        \Z(s) \xrightarrow{Z} \OO \oplus \overline{\OO}  \to \cdots \to \Omega^{s-1} \oplus \overline{\Omega^{s-1}} \to \overline{\Omega^s} \to \cdots \to \overline{\Omega^{t-1}} \to 0 & \text{if } s \leq t\\
        \Z(s) \xrightarrow{Z} \OO \oplus \overline{\OO}  \to \cdots \to \Omega^{s-1} \oplus \overline{\Omega^{s-1}} \to {\Omega^s} \to \cdots \to {\Omega^{t-1}} \to 0 &\text{if } s > t
    \end{cases}
\end{equation*}
where \(\OO = \OO_X\) and \(\Omega^k = \Omega^k_X\) denote the sheaf of holomorphic functions of \(X\) and the sheaf of holomorphic \(k\)-forms, respectively. Moreover, \(\Z(s) = (2\pi i)^s \Z \hookrightarrow \C\) is the Tate twist of the sheaf of locally constant functions. The map \(Z = (\iota,-\iota)\) is given by the natural inclusion \(\iota : \Z(s) \hookrightarrow \OO\) in the first component, and by \(-\iota : \Z(s) \hookrightarrow \overline{\OO}\) in the second one. Therefore, the \emph{integral Bott--Chern} cohomology group of bidegree \((s,t)\) is
\begin{equation*}
    H^{s,t}_\text{BC}(X;\Z) = \mathbb{H}^{s+t}(X,\mathcal{B}^\bullet_{s,t;\Z}),
\end{equation*}
where \(\mathbb{H}^{s+t}\) denotes the \((s+t)\)-th hypercohomology group of a complex of sheaves.
\begin{remark}
    One can analogously define the Bott--Chern cohomology with coefficients in any subring \(R \hookrightarrow \C\)  by replacing \(\Z(s)\) with \(R\) and changing \(Z\) accordingly.  Of course, the inclusion of rings induces an injection of the Bott--Chern complexes \(\mathcal{B}^\bullet_{s,t;R} \hookrightarrow \mathcal{B}^\bullet_{s,t;\C}\) that, in turn, induces a natural map in cohomology \(H^{s,t}_\text{BC}(X;R) \to H^{s,t}_\text{BC}(X;\C)\) that is not injective in general.
\end{remark}
In particular, for \(R = \C\) we have the Bott--Chern complex with coefficients in \(\C\). By \cite[Prop.~4.2,4.3]{Schweitzer2007preprint}, this complex computes the usual Bott--Chern cohomology; more precisely
\begin{equation*}
    H^{s,t}_\text{BC}(X) \iso  \mathbb{H}^{s+t}(X;\mathcal{B}^\bullet_{s,t;\C}) = H^{s,t}_\text{BC}(X;\C).
\end{equation*}

\subsection{The relation with usual cohomology}
\label{compatibility_between_BC_cohomologies}
In certain cases, singular and usual Bott--Chern cohomologies can be combined to compute integral Bott--Chern cohomology groups. Indeed, we observe that the natural inclusion \(\mathcal{B}^\bullet_{s,t;\Z} \hookrightarrow \mathcal{B}^\bullet_{s,t;\C}\) of complexes of sheaves fits in the short exact sequence
\begin{lrbox}{\tikzcdBox}
    \begin{tikzcd}
        0 \ar[r] & \mathcal{B}^\bullet_{s,t;\Z} \ar[r] & \mathcal{B}^\bullet_{s,t;\C} \ar[r] & \frac{\C}{\Z} \ar[r] & 0,
    \end{tikzcd}
\end{lrbox}
\begin{equation}
    \label{exact_sequence_integral_complex-BC}
    \usebox{\tikzcdBox}
\end{equation}
where \(\C/\Z\) is regarded as the locally constant sheaf concentrated in degree \(0\). This yields an induced long exact sequence in hypercohomology
\begin{equation*}
    \begin{tikzcd}[column sep = small]
        \cdots \ar[r] & H^{k-1}\left(X;\frac{\C}{\Z}\right) \ar[r] &  \mathbb{H}^{k}(X;\mathcal{B}^\bullet_{s,t;\Z}) \ar[r] & \mathbb{H}^{k}(X;\mathcal{B}^\bullet_{s,t;\C}) \ar[r] & H^{k}\left(X;\frac{\C}{\Z}\right) \ar[r] & \cdots
    \end{tikzcd}
\end{equation*}
By the Universal Coefficients Theorem for singular cohomology,
\begin{equation*}
    H^{k}\left(X;\tfrac{\C}{\Z}\right) \iso H^{k}(X;\Z) \otimes_{\Z} \tfrac{\C}{\Z} \oplus \operatorname{Ext}^1\left(H_{k-1}(X),\tfrac{\C}{\Z}\right).
\end{equation*}
Since \(\C/\Z\) is a divisible group, the second summand is always trivial, and thus
\begin{equation*}
    H^{k}\left(X;\tfrac{\C}{\Z}\right) \iso \frac{H^{k}(X;\C)}{H^{k}(X;\Z)}.
\end{equation*}
In particular, we have
\begin{proposition}
    \label{0_and_top_BC_cohomology}
    Let \(X\) be an \(n\)-dimensional compact complex manifold. If \(X\) is connected and \(H^{2n-1}(X;\Z) = 0\), then \(H^{0,0}_\text{BC}(X;\Z) \iso H^{n,n}_\text{BC}(X;\Z) \iso \Z\).
\end{proposition}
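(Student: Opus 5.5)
The plan is to treat the two bidegrees separately. In bidegree \((0,0)\) we read off the complex directly. In bidegree \((n,n)\) we use the long exact sequence in hypercohomology induced by \eqref{exact_sequence_integral_complex-BC} with \(k = 2n\), together with the identification \(H^{k}(X;\C/\Z) \iso H^k(X;\C)/H^k(X;\Z)\) established above.

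\emph{Bidegree \((0,0)\).} For \(s = t = 0\) the complex \(\mathcal{B}^\bullet_{0,0;\Z}\) has no holomorphic or antiholomorphic terms. It is therefore just the constant sheaf \(\Z(0) = \Z\) concentrated in degree \(0\). Hence \(H^{0,0}_\text{BC}(X;\Z) = H^0(X;\Z)\), which is \(\Z\) because \(X\) is connected. As a cross-check, the long exact sequence with \(k=0\) gives \(0 \to H^{0,0}_\text{BC}(X;\Z) \to H^{0,0}_\text{BC}(X;\C) \iso \C \to \C/\Z\). This exhibits the group as the kernel of the projection \(\C \to \C/\Z\), which is again \(\Z\).

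\emph{Bidegree \((n,n)\).} Take \(k = 2n\). The relevant portion of the sequence is
\begin{equation*}
    \frac{H^{2n-1}(X;\C)}{H^{2n-1}(X;\Z)} \longrightarrow H^{n,n}_\text{BC}(X;\Z) \longrightarrow H^{n,n}_\text{BC}(X;\C) \longrightarrow \frac{H^{2n}(X;\C)}{H^{2n}(X;\Z)}.
\end{equation*}
The leftmost term vanishes. Indeed, \(H^{2n-1}(X;\Z) = 0\) forces \(H^{2n-1}(X;\C) = H^{2n-1}(X;\Z)\otimes\C = 0\), since \(\operatorname{Tor}(H_{2n-2},\C)=0\). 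So \(H^{n,n}_\text{BC}(X;\Z)\) injects into \(H^{n,n}_\text{BC}(X;\C)\) and is identified with the kernel of the last map. Next, \(H^{n,n}_\text{BC}(X;\C) \iso \C\) for \(X\) compact and connected. This follows from the duality with \(H^{0,0}_\text{Ae}(X)\), whose classes are the pluriharmonic functions, hence constants. The isomorphism is realized by integration over \(X\), since a \(\del\delbar\)-exact top form integrates to zero. On the other side, \(X\) is a compact complex, hence oriented, connected manifold, so \(H^{2n}(X;\Z)\iso\Z\) and \(H^{2n}(X;\C)\iso\C\), again via integration. The last map is therefore a homomorphism \(\C \to \C/\Z\). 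Once we know it is the composite of integration with the projection, its kernel is a copy of \(\Z\) (rescaled by the Tate twist \((2\pi i)^n\)). That yields \(H^{n,n}_\text{BC}(X;\Z)\iso\Z\).

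The main obstacle is this identification of the connecting data. We must show that the map \(\mathbb{H}^{2n}(X;\mathcal{B}^\bullet_{n,n;\C}) \to H^{2n}(X;\C/\Z)\) factors as the natural map from Bott--Chern to de Rham cohomology (sending a \(d\)-closed \((n,n)\)-form to its de Rham class), followed by reduction modulo the image of \(H^{2n}(X;\Z(n))\). I would approach this by mapping \(\mathcal{B}^\bullet_{n,n;\C}\) to the de Rham complex: send \(\C\) identically to \(\C\) and the holomorphic and antiholomorphic parts to forms via \((a,b)\mapsto a+b\). This gives a morphism of short exact sequences from \eqref{exact_sequence_integral_complex-BC} to \(0 \to \Z(n) \to \C \to \C/\Z \to 0\), where the sheaves \(\C\) and \(\Z(n)\) are resolved by forms. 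Naturality of long exact sequences then yields the required commutative square. To make the identification with actual \((n,n)\)-forms explicit, I would compute \(\mathbb{H}^{2n}\) through the soft resolution of Harvey--Lawson.
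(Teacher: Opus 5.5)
Your proof is correct and follows the paper's argument: \(\mathcal{B}^\bullet_{0,0;\Z}=\Z\) settles bidegree \((0,0)\), and in total degree \(2n\) the long exact sequence induced by \(\mathcal{B}^\bullet_{n,n;\Z}\hookrightarrow\mathcal{B}^\bullet_{n,n;\C}\), together with \(H^{2n-1}(X;\C/\Z)=0\), gives \(0\to H^{n,n}_\text{BC}(X;\Z)\to\C\to\C/\Z\). The point you flag as the main obstacle --- that the last arrow is the de Rham class followed by reduction modulo \(\Z(n)\) --- is left implicit in the paper, and your naturality argument settles it; note that projecting \(\mathcal{B}^\bullet_{n,n}\) onto its degree-zero term (\(\Z(n)\hookrightarrow\C\)) is already the required morphism of short exact sequences, so the \((a,b)\mapsto a+b\) component is unnecessary.
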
 
\begin{proof}
    Since \(\mathcal{B}^\bullet_{0,0;\Z} = \Z\), we trivially have \(H^{0,0}_\text{BC}(X;\Z) \iso \Z\). On the other hand, as by compactness \(H^{2n}(X;\Z) \iso \Z\), in total degree \(2n\) the hypercohomology long exact sequence described above gives
    \begin{equation*}
        \begin{tikzcd}
            0 \ar[r] & H^{n,n}_\text{BC}(X,\Z) \ar[r] & \C \ar[r] & \frac{\C}{\Z} \ar[r] & 0
        \end{tikzcd}
    \end{equation*}
    as \(H^{2n-1}\left(X;\frac{\C}{\Z}\right) \iso H^{2n-1}(X;\Z) \otimes_\Z \frac{\C}{\Z} = 0\). The statement then follows immediately.
\end{proof}
\begin{remark}
    In general, the top degree Bott--Chern cohomology is not isomorphic to \(\Z\). Indeed, if \(\kappa : H^{2n-1}(X;\Z) \otimes \frac{\C}{\Z} \to H^{n,n}_\text{BC}(X,\Z)\) is the connecting morphism in degree \(2n-1\), we only have the extension of groups
    \begin{equation*}
        \begin{tikzcd}
            0 \ar[r] & \Ima\kappa \ar[r] & H^{n,n}_\text{BC}(X,\Z) \ar[r] & \Z \ar[r] & 0.
        \end{tikzcd}
    \end{equation*}
    Since the first is a quotient of a divisible group, it is divisible as well; thus the extension is trivial and so
    \begin{equation*}
        H^{n,n}_\text{BC}(X,\Z) \iso \Z \oplus \Ima\kappa.
    \end{equation*}
\end{remark}
In particular, for a compact and connected complex manifold, only the \emph{reduced component} of the top integral Bott--Chern cohomology group is isomorphic to \(\Z\) in general.

\subsection{A soft resolution of the Bott--Chern complex}
\label{soft_resolution}
In many cases, the exact sequence above cannot determine the integral Bott--Chern cohomology groups of middle degree: for this we need a more direct approach. Namely, we need to consider an acyclic resolution of the Bott--Chern complex. Before discussing it, we recall some basic constructions in homological algebra. For this background our main reference is \cite{Weibel1994}.

First we introduce the following notation: for any chain complex \((A^\bullet,d_A)\) we denote by \(A^{\bullet \geq s}\) the dumb filtration
\begin{equation*}
    A^s \to A^{s+1} \to \cdots;
\end{equation*}
the quotient \(\sigma_s A^\bullet = A^\bullet/A^{\bullet\geq s}\) is a chain complex with no terms above degree \(s-1\). Therefore, we can compactly write the integral Bott--Chern complex as
\begin{equation*}
    \mathcal{B}_{s,t;\Z}^\bullet = \Z(s) \xrightarrow{Z} \sigma_s \Omega^\bullet \oplus \sigma_t \overline{\Omega^\bullet},
\end{equation*}
for \(s,t \geq 0\). We can interpret \(\Z(s)\) as a chain complex concentrated in degree \(0\), whereby \(Z\) defines a morphism of chain complexes
\begin{equation*}
    \begin{tikzcd}[column sep = small]
        \cdots \ar[r] & 0 \ar[r] \ar[d] & \cdots \ar[r]  & 0 \ar[r]  \ar[d] & \Z(s) \ar[r]  \ar[d, "Z"] & 0 \ar[r]  \ar[d] &  \cdots \ar[r] & 0 \ar[r]   \ar[d] & \cdots\\
        \cdots \ar[r] & 0\ar[r] & \cdots \ar[r]  & 0 \ar[r] & \OO \oplus \overline{\OO} \ar[r]& \Omega^1 \oplus \overline{\Omega^1} \ar[r] & \cdots \ar[r] & \Omega^k \oplus \overline{\Omega^k} \ar[r]& \cdots
    \end{tikzcd}
\end{equation*}

A second construction we need is the \emph{shift}: given a chain complex \((A^\bullet, d_A)\) we define its \emph{shift} \(A^\bullet[k]\) as
\begin{equation*}
    A^\bullet[k] = A^{\bullet - k} \quad \text{and} \quad d_{A[k]} = (-1)^k d_A.
\end{equation*}
According to this choice of signs, positive values of \(k\) yield a shift \emph{to the right}, while negative values induce a shift \emph{to the left}. It is not difficult to check that shifting chain complexes defines an endofunctor of the category of chain complexes; indeed this appears in the definition of \emph{mapping cone} \cite[\S 1.5]{Weibel1994}.
\begin{definition}
    Let \(f : (A^\bullet,d_A) \to (B^\bullet,d_B)\) be a morphism of chain complexes. The \emph{mapping cone} of \(f\) is the chain complex \((\operatorname{cone}(f)^\bullet, d_f)\) with
    \begin{equation*}
        \operatorname{cone}(f)^n = A^{n+1} \oplus B^n = (A^\bullet[-1] \oplus B^\bullet)^n,
    \end{equation*}
    and the differential given by the matrix
    \begin{equation*}
        \begin{bmatrix}
            -d_A & 0 \\
            -f & d_B
        \end{bmatrix}.
    \end{equation*}
\end{definition}
\begin{remark}
    We chose this sign convention to match that of bicomplexes. Indeed, as their squares \emph{anticommute}, this is the natural choice so that the mapping cone of a chain map \(f : A^\bullet \to B^\bullet\) defines a double complex
    \begin{equation*}
        \begin{tikzcd}
            \cdots \ar[r] & A^\ell \ar[r,"-d_A^\ell"] \ar[d,"-f^{\ell+1}"'] & A^{\ell+1} \ar[d,"-f^{\ell+1}"] \ar[r] & \cdots\\
            \cdots \ar[r] & B^\ell \ar[r, "d_B^\ell"] & B^{\ell+1} \ar[r] &\cdots
        \end{tikzcd}
    \end{equation*}
    Nevertheless, choosing anticommutative squares for double complexes breaks the trivial identification with chain complexes of chain complexes (whose squares are commutative instead). To restore it, we need to introduce alternating signs: namely, if \(((A^{\bullet_1}, d_{A^{\bullet_1}})^{\bullet_2}, f^{\bullet_2})\) is a chain complex of chain complexes, the associated bicomplex is \((A^{\bullet,\bullet}, d_h, d_v)\), with \(A^{k,\ell} = (A^{\ell})^k\) and the differentials 
    \begin{equation*}
        d_h^{k,\ell} = d_{A^k}^\ell : A^{k,\ell} \to A^{k,\ell+1}, \quad\text{while} \quad d_v^{k,\ell} = (-1)^\ell f^\ell : A^{k,\ell} \to A^{k+1,\ell}.
    \end{equation*}
    Keeping in mind this identification, henceforth we will not draw a sharp distinction between complexes (and so in particular morphisms) of chain complexes and bicomplexes.
\end{remark}
\begin{definition}
    A chain bicomplex \((C^{\bullet,\bullet}, d_h, d_v)\) determines two chain complexes:
    \begin{equation*}
        \operatorname{Tot}^\Pi(C)^k = \prod_{p+q = k} C^{p,q}, \qquad \operatorname{Tot}^\oplus(C)^k = \bigoplus_{p+q = k} C^{p,q}
    \end{equation*}
    endowed with the natural differentials. The identity \(d^2 = 0\) follows from \(d_h \circ d_v + d_v \circ d_h = 0\). Notice that, whenever the anti-diagonals of \(C^{\bullet,\bullet}\) are bounded, the two constructions of total complexes coincide. In particular this holds for bicomplexes contained in a quadrant, as well as for those contained in a strip.
\end{definition}

We can immediately observe that for any morphism of chain complexes \(f : A^\bullet \to B^\bullet\), \(\operatorname{cone}(f)^\bullet[1] = \operatorname{Tot}(A^\bullet \xrightarrow{f} B^\bullet)\); in particular \(\mathcal{B}^\bullet_{s,t;\Z} = \operatorname{cone}(-Z)^\bullet[1]\). This observation yields an acyclic resolution of \(\mathcal{B}^\bullet_{s,t;\Z}\) assembled from an acyclic resolution of \(\Z(s)\) and one for \(\sigma_s \Omega^\bullet \oplus \sigma_t \overline{\Omega^\bullet}\). This is done as follows: first we consider the complex \((\mathcal{I}^\bullet_{\Z}, d_{\Z})\) of \emph{locally integral currents} on \(X\); we do not need their precise definition (for it see \cite{WuXiaojun2023ItaC} and references therein). A sufficient intuition is given by considering them as generalized singular chains with integral coefficients; of course their sheaf admits the inclusion \(\Z \hookrightarrow \mathcal{I}^\bullet_\Z\). Since we need coefficients in \(\Z(s)\), we take the Tate twist
\begin{equation*}
    \Z(s) \hookrightarrow \mathcal{I}_{\Z(s)}^\bullet = \mathcal{I}_\Z^\bullet \otimes \Z(s).
\end{equation*}
By \cite[Lemma~1]{WuXiaojun2023ItaC}, \(\mathcal{I}_\Z^\bullet\) is a soft resolution of \(\Z\), and since \(\Z(s) \iso \Z\) as \(\Z\)-modules, \(\mathcal{I}_{\Z(s)}^\bullet\) is a soft resolution of \(\Z(s)\) as well. To construct a resolution of \(\sigma_s \Omega^\bullet \oplus \sigma_t\overline{\Omega^\bullet}\), we denote by \(\mathcal{D}'^{s,t}\) the group of complex currents on \(X\) of type \((s,t)\): for any \(s\), \((\mathcal{D}'^{s,\bullet},\delbar)\) is a fine resolution of \(\Omega^s\), as well as \((\mathcal{D}'^{\bullet,t},\del)\) is a fine resolution of \(\overline{\Omega^t}\) by complex conjugation. Therefore,
\begin{equation*}
    \sigma_s \Omega^{\bullet}  \hookrightarrow (\sigma_{s,\cdot} \mathcal{D}'^{\bullet,\bullet},\delbar), \quad\text{and} \quad \sigma_t \overline{\Omega^{\bullet}}  \hookrightarrow (\sigma_{\cdot,t} \mathcal{D}'^{\bullet,\bullet},\del)
\end{equation*}
are fine---and thus soft---resolutions of \(\sigma_s \Omega^{\bullet}\) and \(\sigma_t \overline{\Omega^{\bullet}}\) respectively. To obtain an acyclic resolution of \(\mathcal{B}_{s,t;\Z} = \Z(s) \hookrightarrow \sigma_s\Omega^\bullet \oplus \sigma_t \overline{\Omega^\bullet}\) we consider as in \cite[p. 3]{Teh2016} the chain complex \((\sigma_{s,t}\mathcal{D}'^\bullet, d_{s,t})\) defined as
\begin{equation*}
    \sigma_{s,t}\mathcal{D}'^k = \bigoplus_{\substack{i + j = k\\i < s}} \mathcal{D}'^{i,j} \oplus \bigoplus_{\substack{i + j = k\\j < t}} \mathcal{D}'^{i,j}
\end{equation*}
with \(d_{s,t} : \mathcal{D}'^{k} \to \mathcal{D}'^{k+1}\) given by the pair
\begin{equation*}
    d_{s,t}(x,y) = (\pi_{s,\cdot} \delbar x, \pi_{\cdot,t} \del y ),
\end{equation*}
where
\begin{equation*}
    \pi_{s,\cdot} : \mathcal{D}'^k \to \bigoplus_{\substack{i + j = k\\i < s}} \mathcal{D}'^{i,j}, \qquad \text{and} \quad \pi_{\cdot,t} : \mathcal{D}'^k \to \bigoplus_{\substack{i + j = k\\j < t}} \mathcal{D}'^{i,j}
\end{equation*}
are the natural projections. Thus, applying \cite[Prop.~A.3]{HarveyLawson2008}, as it is done in \cite[Prop.~2.9]{Teh2016}, to the diagram
\begin{equation*}
    \begin{tikzcd}
        \mathcal{I}_{\Z(s)}^\bullet \ar[r,"\Psi"] & \sigma_{s,t} \mathcal{D}'^{\bullet}\\
        \Z(s) \ar[u,hook] \ar[r,"Z"] & \sigma_s\Omega^\bullet \oplus \sigma_t \overline{\Omega^\bullet} \ar[u,hook]
    \end{tikzcd}
\end{equation*}
where \(\Psi\) is the natural projection on bigraded components, we obtain
\begin{align*}
    \mathbb{H}^k(X,\mathcal{B}^\bullet_{s,t;\Z}) &= \mathbb{H}^k(X,\Z(s) \hookrightarrow \sigma_s\Omega^\bullet \oplus \sigma_t \overline{\Omega^\bullet})\\
    &\iso \mathbb{H}^k(X,\operatorname{cone}(\Psi : \mathcal{I}_{\Z(s)}^\bullet \to \sigma_{s,t} \mathcal{D}'^{\bullet} ))\\
    &\iso H^k (\operatorname{cone}(\Gamma(X,\Psi) : \mathcal{I}_{\Z(s)}(X)^\bullet \to \sigma_{s,t} \mathcal{D}'^{\bullet}(X))).
\end{align*}

\section{Integral BC cohomology of generalized Calabi--Eckmann 3-folds}
In the case of generalized Calabi--Eckmann threefolds, the computation of integral Bott--Chern cohomology groups is trivial in most degrees. Indeed, since they are all homeomorphic to \(S^3 \times S^3\), their fifth Betti number vanishes; thus, by \Cref{0_and_top_BC_cohomology}, \(H^{0,0}_\text{BC}(X;\Z) \iso H^{3,3}_\text{BC}(X;\Z) \iso \Z\). Moreover, as \(\mathcal{B}^\bullet_{s,t;\Z} \iso \overline{\mathcal{B}^\bullet_{t,s;\Z}}\), integral Bott--Chern cohomology groups are symmetric in the exchange of \(s\) and \(t\); hence we can restrict to computing \(H^{s,t}_\text{BC}(N_q;\Z)\) for \(s \geq t\). In higher degrees, the exact sequence \labelcref{exact_sequence_integral_complex-BC} yields
\begin{equation*}
    \begin{tikzcd}
        H^{s+t-1}(N_q;\Z) \otimes_\Z \frac{\C}{\Z} \ar[r] &  H^{s,t}_\text{BC}(N_q,\Z) \ar[r] & H^{s,t}_\text{BC}(N_q,\C) \ar[r] & H^{s+t}(N_q;\Z) \otimes_\Z \frac{\C}{\Z}.
    \end{tikzcd}
\end{equation*}
Since \(H^{s+t-1}(N_q;\Z) = H^{s+t}(N_q;\Z) = 0\) for \(s+t \in \{2,5\}\), we immediately obtain  \(H^{s,t}_\text{BC}(N_q;\Z) \iso H^{s,t}_\text{BC}(N_q;\C)\) in these total degrees. The same also occurs in total degree \(1\). Indeed, as the map
\begin{equation*}
    H^{0,0}_\text{BC}(N_q;\C) \longrightarrow H^0\left(N_q;\tfrac{\C}{\Z}\right) \iso \frac{H^0\left(N_q;\C\right)}{H^0\left(N_q;\Z\right)}
\end{equation*}
is induced by the identity, it is surjective. Hence, in degree \(1\), the exact sequence \labelcref{exact_sequence_integral_complex-BC} splits as
\begin{equation*}
    \begin{tikzcd}
        0 \ar[r] &  H^{s,t}_\text{BC}(N_q,\Z) \ar[r] & H^{s,t}_\text{BC}(N_q,\C) \ar[r] & H^{1}(N_q;\Z) \otimes_\Z \frac{\C}{\Z} = 0,
    \end{tikzcd}
\end{equation*}
whence \(H^{s,t}_\text{BC}(N_q;\Z) \iso H^{s,t}_\text{BC}(N_q;\C) = 0\) also for \(s+t = 1\).

Using the same exact sequence argument, we can compute the integral Bott--Chern cohomology groups of total degree \(3\). In bidegree \((3,0)\) we have
\begin{equation*}
    \begin{tikzcd}
        0 = H^2\left(N_q;\frac{\C}{\Z}\right) \ar[r] & H^{3,0}_\text{BC}(N_q;\Z) \ar[r]& H^{3,0}_\text{BC}(N_q;\C) = 0,
    \end{tikzcd}
\end{equation*}
whence \(H^{3,0}_\text{BC}(N_q;\Z) = 0\). On the other hand, in bidegree \((2,1)\), we get
\begin{equation*}
    \begin{tikzcd}
        0 = H^2\left(N_q;\frac{\C}{\Z}\right) \ar[r] & H^{2,1}_\text{BC}(N_q;\Z) \ar[r]& H^{2,1}_\text{BC}(N_q;\C) \ar[r,"p \circ \epsilon"] & H^3\left(N_q;\frac{\C}{\Z}\right) \iso \frac{H^3(N_q;\C)}{H^3(N_q;\Z)};
    \end{tikzcd}
\end{equation*}
where \(\epsilon : H^{2,1}_\text{BC}(N_q;\C) \to H^3(N_q;\C)\) is the natural map to de Rham cohomology and \(p\) is the projection onto the quotient. As \(H^3(N_q;\Z)\) can be naturally identified with a subgroup of \(H^3(N_q;\C) \iso H^3(N_q;\Z) \otimes_\Z \C\), the exactness implies
\begin{equation*}
    H^{2,1}_\text{BC}(N_q;\Z) \iso \ker(p \circ \epsilon) = \epsilon^\inv (H^3(N_q;\Z)).
\end{equation*}
The kernel \(\epsilon^\inv (H^3(N_q;\Z))\) contains precisely the classes \(t \alpha\) such that \(\epsilon(t\alpha) \in H^3(N_q;\Z)\). By \cref{expressions_of_alpha}, 
\begin{equation*}
    \epsilon(t\alpha) = t \theta_1 \wedge w_1 + t \frac{2+iq}{2(q^2 +1)} (q \theta_1 + \theta_2) \wedge w_4;
\end{equation*}
this lies in \( H^3(N_q;\Z)\) only if the conditions
\begin{equation*}
    \begin{cases}
        t \in \Z\\
        t \frac{2+iq}{2(q^2 +1)} \in \Z
    \end{cases}
\end{equation*}
are both satisfied. As this admits a nontrivial solution only if \(q = 0\), we deduce that
\begin{equation*}
    H^{2,1}_\text{BC}(N_q;\Z) \iso \ker(p \circ \epsilon) =
    \begin{cases}
        \left\langle \alpha \right\rangle \iso \Z & \text{if } q = 0 \\
        0 & \text{if } q \neq 0.
    \end{cases}
\end{equation*}
\begin{remark}
    This behavior has no analogue in de~Rham cohomology. Indeed, by the de~Rham Theorem and the Universal Coefficients Theorem, the de Rham cohomology of a manifold is isomorphic to the singular cohomology with coefficients in \(\R\) (or \(\C\) in the complex case); thus any class can be rescaled so that it lies in the singular cohomology with integral coefficients. Here, instead, we have the Bott--Chern class \([\alpha]_\text{BC}\) that cannot be rescaled to any integral class when \(q > 0\). This happens because the canonical map
    \begin{equation*}
        H^{2,1}_\text{BC}(N_q;\C) \to H^3(N_q;\C),
    \end{equation*}
    that is injective by \cref{fig:the_model_q_not_0}, identifies the Bott--Chern cohomology group with a complex subspace of \(H^3(N_q;\C)\) that has trivial intersection with the image of the inclusion \(H^3(N_q;\Z) \hookrightarrow H^3(N_q;\C)\) induced by the Universal Coefficients Theorem. With a slight modification of this argument, the same behavior can be observed in Deligne cohomology as well. For this cohomological theory, see e.g. \cite{Brylinski2008}.
\end{remark}

\subsection{Computation of \texorpdfstring{\(H^{2,2}_\mathrm{BC}(N_q;\Z)\)}{H^22_BC(N_q;ZZ)}}
To compute the integral Bott--Chern cohomology in total degree \(4\) we employ the soft resolution of \(\mathcal{B}^\bullet_{s,t;\Z}\) we described in \Cref{soft_resolution}. This involves the chain complexes of global currents \(\mathcal{I}^\bullet_{\Z(s)}(N_q)\) and \(\mathcal{D}'^{s,t}(N_q)\) that are rarely finitely generated: this makes impractical to compute their cohomology directly. Nevertheless, by \cite[Thm.~5.11]{FedererFleming1960}, the inclusion of smooth singular cochains \( C_\infty^\bullet(N_q;\Z(s)) \hookrightarrow \mathcal{I}^\bullet_{\Z(s)}(N_q)\) induced by Poincaré duality is a quasi-isomorphism. Similarly, the inclusion \(\mathcal{A}^{\bullet,\bullet}_{N_q} \hookrightarrow \mathcal{D}'^{\bullet,\bullet}(N_q)\) of the bicomplex of smooth differential forms into the complex of currents is a quasi-isomorphism as well. Therefore, denoting by \(\tilde\Psi : C_\infty^\bullet(N_q;\Z(s)) \to \sigma_{s,t}\mathcal{A}^{\bullet}_{N_q}\) the counterpart of the map \(\Psi\) for forms, we have a square
\begin{equation*}
    \begin{tikzcd}
        C_\infty^\bullet(N_q;\Z(s)) \ar[r,"\tilde\Psi"] \ar[d,hook] & \sigma_{s,t}\mathcal{A}^{\bullet}_{N_q} \ar[d, hook]\\
         \mathcal{I}^\bullet_{\Z(s)}(N_q) \ar[r,"\Psi"] & \sigma_{s,t}\mathcal{D}'^{\bullet}(N_q)
    \end{tikzcd}
\end{equation*}
whose vertical arrows are quasi-isomorphisms of chain complexes of \(\Z\)-modules; moreover, it commutes because Poincaré duality provides a natural identification of differential forms as currents.

This reduces our problem to the computation of the cohomology of complexes of differential forms: for these we have the cohomological model described in \Cref{cohomology_GCE}. Indeed, we can replace \((\mathcal{A}^{\bullet,\bullet}_{N_q},\del,\delbar)\) with the model \((\mathcal{M}^{\bullet,\bullet}, \bar\delta, \delta)\) defined in \labelcref{the_model}. Similarly, a cohomological model for \(C_\infty^\bullet(N_q;\Z)\) is given by \((\mathcal{M}_\Z^\bullet, d_{\mathcal{M}})\) as described in \Cref{real_cohomology_model}. A priori, this is just a model for \(C^\bullet_\infty(N_q;\R)\), however, a straightforward computation of the Leray--Serre spectral sequence of the bundle \(N_q \to \hirz{q}\) shows that its singular cohomology is free. Thus, restricting it to forms that are dual to the singular chains of \(N_q\) provides a model for \(C^\bullet_\infty(N_q;\Z)\). Taking the Tate twist, we obtain a model \(\mathcal{M}_{\Z(s)}^\bullet\) for \(C^\bullet_\infty(N_q;\Z(s))\).

By construction, \(\tilde\Psi\) lifts to a map \(\mathcal{M}_\Z^\bullet \to \sigma_{s,t}\mathcal{M}^{\bullet}\) which, for simplicity, we will keep denoting by \(\tilde\Psi\). Therefore, the quasi-isomorphism \(\tilde\Psi \simeq \Psi\) induces a quasi-isomorphism of the corresponding cone complexes, namely:
\begin{equation*}
    \mathcal{B}_{s,t;\Z}^\bullet \simeq \operatorname{cone}(-Z)^\bullet[1] \simeq \operatorname{cone}(-\Psi)^\bullet[1] \simeq \operatorname{cone}(-\tilde\Psi)^\bullet[1].
\end{equation*}
In terms of \(\mathcal{M}^{\bullet,\bullet}\), the chain complex \(\operatorname{cone}(-\tilde\Psi)^\bullet[1]\) is the total complex of the following bicomplex
\begin{lrbox}{\tikzcdBox}
    \begin{tikzcd}
        \mathcal{M}^0_{\Z(s)} \ar[d, "\tilde\Psi^0"] \ar[r,"-d_{\mathcal{M}}"] & \mathcal{M}^1_{\Z(s)} \ar[d, "\tilde\Psi^1"] \ar[r,"-d_{\mathcal{M}}"]& \mathcal{M}^2_{\Z(s)} \ar[d, "\tilde\Psi^2"] \ar[r,"-d_{\mathcal{M}}"] & \mathcal{M}^3_{\Z(s)} \ar[d, "\tilde\Psi^3"] \ar[r,"-d_{\mathcal{M}}"] & \cdots\\
        \sigma_{s,t}\mathcal{M}^0 \ar[r, "\delta_{s,t}^0"] & \sigma_{s,t}\mathcal{M}^1 \ar[r,"\delta_{s,t}^1"] & \sigma_{s,t}\mathcal{M}^2 \ar[r, "\delta_{s,t}^2"] & \sigma_{s,t}\mathcal{M}^3 \ar[r,"\delta_{s,t}^3"] & \cdots
    \end{tikzcd}    
\end{lrbox}
\begin{equation}
    \label{computingBCmodel}
    \usebox{\tikzcdBox}
\end{equation}
Employing this, we can directly compute the remaining integral Bott--Chern cohomology groups \(H^{s,t}_\text{BC}(N_q;\Z)\) with \(s+t = 4\). We start from the most involved case of \(H^{2,2}_\text{BC}(N_q;\Z)\). By construction, its computation can be reduced to \(H^4(\operatorname{Tot}(\operatorname{cone}(-\tilde\Psi))^\bullet[1])\). In the degrees of our interest, this complex can be succinctly written as
\begin{equation*}
    A^\bullet = A^3 \xrightarrow{d_A^3} A^4 \xrightarrow{d_A^4} A^5
\end{equation*}
where, as \(\mathcal{M}^{k,0} = \mathcal{M}^{0,k} = \{0\}\) for \(k > 1\),
\begin{align*}
    A^3 &= \mathcal{M}_{\Z(2)}^3 \oplus \mathcal{M}^{1,1} \oplus \mathcal{M}^{1,1}\\
    A^4 &= \mathcal{M}_{\Z(2)}^4 \oplus \mathcal{M}^{1,2} \oplus \mathcal{M}^{2,1}\\
    A^5 &= \mathcal{M}_{\Z(2)}^5 \oplus \mathcal{M}^{1,3} \oplus \mathcal{M}^{3,1}
\end{align*}
while, as \(\pi^{1,3} = 0 = \pi^{3,1}\), the differentials are given as the matrices
\begin{equation*}
    d_A^3 =
    \begin{bmatrix}
        -d_{\Z} & 0 & 0\\
        -\pi^{1,2} & \delta & 0\\
        \pi^{2,1} & 0 & \bar\delta
    \end{bmatrix},
    \quad
    d_A^4 =
    \begin{bmatrix}
        -d_{\Z} & 0 & 0\\
        0 & \delta & 0\\
        0 & 0 & \bar\delta
    \end{bmatrix}.
\end{equation*}
We can verify that, as \(\Z\)-modules
\begin{align*}
    A^3 &= \left\langle \theta_1 \wedge w_1, \theta_1 \wedge w_4, \theta_2 \wedge w_1, \theta_2 \wedge w_4 \right\rangle \otimes_\Z \Z(2) \oplus \left\langle w_1,w_4,\xi\wedge\bar\xi \right\rangle \otimes_\Z \C\\
    A^4 &= \left\langle w_1 \wedge w_4, w_1 \wedge \theta_1 \wedge \theta_2, w_4 \wedge \theta_1 \wedge \theta_2 \right\rangle \otimes_\Z \Z(2) \oplus \left\langle \bar\xi \wedge w_1, \bar\xi \wedge w_4 \right\rangle \otimes_\Z \C\\
    &\quad \oplus \left\langle \xi \wedge w_1, \xi \wedge w_4 \right\rangle \otimes_\Z \C.
\end{align*}
A direct computation of the differentials \(d_A^3\) and \(d_A^4\) yields:
\begin{align*}
    \ker d_A^4 &= \left\langle w_1 \wedge w_4\right\rangle \otimes_\Z \Z(2) \oplus \left\langle \bar\xi \wedge w_1, \bar\xi \wedge w_4 \right\rangle \otimes_\Z \C \oplus \left\langle \xi \wedge w_1, \xi \wedge w_4 \right\rangle \otimes_\Z \C,\\
    \Ima d_A^3 &= \Big\langle \frac{1}{2} \xi \wedge w_1 - \frac{1}{2}\bar\xi \wedge w_1, - w_1 \wedge w_4 -\frac{i}{2}\xi \wedge w_1 - \frac{i}{2}\bar\xi \wedge w_1,\\
    &\quad -w_1 \wedge w_4 + \frac{1}{2}\xi \wedge w_4 - \frac{1}{2}\bar\xi \wedge w_4, q w_1 \wedge w_4 -\frac{i}{2}\xi \wedge w_4 - \frac{i}{2}\bar\xi \wedge w_4 \Big\rangle \otimes_\Z \Z(2)\\
    &\quad \oplus \left\langle \bar\xi \wedge (w_1 + i w_4) \right\rangle \otimes_\Z \C \oplus \left\langle \xi \wedge (w_1 - i w_4)\right\rangle \otimes_\Z \C
\end{align*}
Thus, after standard manipulations of the generators, we obtain
\begin{equation*}
    \frac{\ker d_A^4}{\Ima d_A^3} \iso \frac{\left\langle w_1 \wedge w_4\right\rangle \otimes_\Z \Z(2) \oplus \left\langle \bar\xi \wedge w_1, \xi \wedge w_1 \right\rangle \otimes_\Z \C}{\left\langle q w_1 \wedge w_4 , \frac{1}{2} \xi \wedge w_1 - \frac{1}{2}\bar\xi \wedge w_1, w_1\wedge w_4  +\frac{i}{2}\xi \wedge w_1 + \frac{i}{2}\bar\xi \wedge w_1 \right\rangle \otimes_\Z \Z(2)}.
\end{equation*}
Setting \(\digamma \dfn w_1 \wedge w_4\), \(\lambda \dfn \frac{1}{2} \xi \wedge w_1 - \frac{1}{2}\bar\xi \wedge w_1\), and \(\mu \dfn \frac{i}{2}\xi \wedge w_1 + \frac{i}{2}\bar\xi \wedge w_1\), we have
\begin{equation}
    \label{H22explicit}
    H^{2,2}_\text{BC}(N_q;\Z) \iso \frac{\ker d_A^4}{\Ima d_A^3} \iso \frac{\left\langle\digamma\right\rangle\otimes_\Z \Z(2) \oplus \left\langle\lambda,\mu\right\rangle \otimes_\Z \C}{\left\langle q \digamma, \lambda, \digamma + \mu\right\rangle \otimes_\Z \Z(2)} \iso \frac{\C}{\Z(2)} \oplus \frac{\C}{q \Z(2)}.
\end{equation}

\subsection{Computation of \texorpdfstring{\(H^{3,1}_\mathrm{BC}(N_q;\Z)\)}{H^31_BC(N_q;ZZ)}}

As above, in order to compute \(H^{3,1}_\text{BC}(N_q;\Z)\) we consider the following chain complex:
\begin{equation*}
    C^\bullet = C^3 \xrightarrow{d_C^3} C^4 \xrightarrow{d_C^4} C^5
\end{equation*}
where,
\begin{align*}
    C^3 &= \mathcal{M}_{\Z}^3 \oplus \mathcal{M}^{1,1}\\
    C^4 &= \mathcal{M}_{\Z}^4 \oplus \mathcal{M}^{1,2} \oplus \mathcal{M}^{2,1}\\
    C^5 &= \mathcal{M}_{\Z}^5 \oplus \mathcal{M}^{2,2} ,
\end{align*}
and the differentials given by the matrices
\begin{equation*}
    d_C^3 =
    \begin{bmatrix}
        -d_{\Z} & 0 \\
        \pi^{1,2} & \delta\\
        \pi^{2,1}  & \bar\delta
    \end{bmatrix},
    \quad
    d_C^4 =
    \begin{bmatrix}
        -d_{\Z} & 0 & 0\\
        \pi^{2,2} & \bar\delta & \delta\\
    \end{bmatrix}.
\end{equation*}
As \(\Z\)-modules 
\begin{align*}
    C^3 &= \left\langle \theta_1 \wedge w_1, \theta_1 \wedge w_4, \theta_2 \wedge w_1, \theta_2 \wedge w_4 \right\rangle \oplus \left\langle w_1,w_4,\xi\wedge\bar\xi \right\rangle \otimes_\Z \C,\\
    C^4 &= \left\langle w_1 \wedge w_4, w_1 \wedge \theta_1 \wedge \theta_2, w_4 \wedge \theta_1 \wedge \theta_2 \right\rangle \oplus \left\langle \bar\xi \wedge w_1, \bar\xi \wedge w_4 \right\rangle \otimes_\Z \C \oplus \left\langle \xi \wedge w_1, \xi \wedge w_4 \right\rangle \otimes_\Z \C,
\end{align*}
and
\begin{align*}
    \ker d_C^4 &= \Big\langle w_1 \wedge w_4 + \frac{i}{2}(\xi - \bar\xi) \wedge w_1, w_1 \wedge w_4 - \frac{1}{2}(\xi+\bar\xi) \wedge w_4, q w_1 \wedge w_4 - \frac{i}{2}(\xi - \bar\xi) \wedge w_4  \Big\rangle\\
    &\qquad \oplus \left\langle (\bar\xi - \xi) \wedge w_1 \right\rangle \otimes_\Z \C,\\
    \Ima d_A^3 &= \Big\langle \frac{1}{2} (\xi +\bar\xi) \wedge w_1, - w_1 \wedge w_4 + \frac{1}{2}(\xi + \bar \xi) \wedge w_4, - w_1 \wedge w_4 - \frac{i}{2}( \xi- \bar\xi) \wedge w_1,\\
    &\quad q w_1 \wedge w_4 -\frac{i}{2}(\xi - \bar\xi) \wedge w_4 \Big\rangle \oplus \left\langle \bar\xi \wedge (w_1 + i w_4) \right\rangle \otimes_\Z \C \oplus \left\langle \xi \wedge (w_1 - i w_4)\right\rangle \otimes_\Z \C.
\end{align*}
Simplifying the quotient we find
\begin{equation*}
    H^{3,1}_\text{BC}(N_q;\Z) \iso \frac{\ker d_C^4}{\Ima d_C^3} \iso \{0\}.
\end{equation*}
Therefore, we can represent the integral Bott--Chern cohomology of \(N_q\) as the Hodge diamond:
\begin{equation}
    \label{Hodge_diamond_integral_BC_N_q}
    \begin{array}{r@{\quad}ccccccc}
        &&&& \Z &&& \\
        &&& 0 && 0 && \\
        && 0 && \C^2 && 0 & \\
        H^{\bullet,\bullet}_{\text{BC}}(N_0;\Z) = & 0 && \Z && \Z && 0 \\
        && 0 && \frac{\C}{\Z} \oplus \C && 0 & \\
        &&& \C && \C && \\
        &&&& \Z &&& \\[2ex] %
        &&&& q = 0 &&&
    \end{array}
    \quad %
    \begin{array}{r@{\quad}ccccccc}
        &&&& \Z &&& \\
        &&& 0 && 0 && \\
        && 0 && \C^2 && 0 & \\
        H^{\bullet,\bullet}_{\text{BC}}(N_q;\Z) = & 0 && 0 && 0 && 0 \\
        && 0 && \frac{\C}{\Z} \oplus \frac{\C}{q\Z} && 0 & \\
        &&& \C && \C && \\
        &&&& \Z &&& \\[2ex] 
        &&&& q > 0 &&&
    \end{array}
\end{equation}
Since for any \(q > 0\) \(\frac{\C}{q\Z} \iso \C^*\) as complex abelian Lie groups, we have
\begin{theorem}
    \label{integral_BC_are_iso}
    If \(q,q'\) are positive integers, \(H^{s,t}_\text{BC}(N_q;\Z) \iso H^{s,t}_\text{BC}(N_{q'};\Z)\) for any \(s,t\).
\end{theorem}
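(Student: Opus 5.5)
The plan is to read the isomorphism off the computed Hodge diamond \labelcref{Hodge_diamond_integral_BC_N_q}, bidegree by bidegree, and show that for \(q>0\) every entry is, as an abstract group, independent of \(q\). Only \((2,2)\) needs an argument.

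First, the bidegrees not involving \(q\). By \Cref{0_and_top_BC_cohomology} and the vanishing of \(b_5(N_q)\), we have \(H^{0,0}_\text{BC}\iso H^{3,3}_\text{BC}\iso\Z\) for every \(q\). In total degrees \(1,2,5\) the exact sequence \labelcref{exact_sequence_integral_complex-BC} gives \(H^{s,t}_\text{BC}(N_q;\Z)\iso H^{s,t}_\text{BC}(N_q;\C)\). The complex Bott--Chern table computed in \Cref{cohomology_GCE} is the same for every \(q>0\): it is \(0\) in total degree \(1\), \(\C^2\) in bidegree \((1,1)\) and \(\C\) in bidegrees \((3,2)\) and \((2,3)\). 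In total degree \(3\), the argument via \(\epsilon^{-1}(H^3(N_q;\Z))\) shows that \(H^{2,1}_\text{BC}(N_q;\Z)=0\) for all \(q\neq 0\). Conjugation symmetry \(\mathcal{B}^\bullet_{s,t;\Z}\iso\overline{\mathcal{B}^\bullet_{t,s;\Z}}\) handles \((1,2)\), and \((3,0),(0,3)\) vanish. In total degree \(4\), \(H^{3,1}_\text{BC}(N_q;\Z)=0\), and hence \(H^{1,3}_\text{BC}(N_q;\Z)=0\) by symmetry. So every bidegree except \((2,2)\) agrees.

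The remaining bidegree is \((2,2)\), where \labelcref{H22explicit} gives \(H^{2,2}_\text{BC}(N_q;\Z)\iso \C/\Z(2)\oplus \C/q\Z(2)\). This is the one step with actual content, though it is elementary. I would show that for \(q>0\) the map \(z\mapsto z/q\) is a group isomorphism \(\C/q\Z(2)\to\C/\Z(2)\). It is well defined and injective because \(z\in q\Z(2)\) if and only if \(z/q\in\Z(2)\), and it is clearly surjective.

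Moreover \(\C/\Z(2)\iso\C/\Z\) via division by \((2\pi i)^2=-4\pi^2\), and \(\C/\Z\iso\C^*\) via \(z\mapsto e^{2\pi i z}\), which is a biholomorphic group isomorphism. Hence
\[
H^{2,2}_\text{BC}(N_q;\Z)\iso \C/\Z(2)\oplus\C/\Z(2)\iso \C^*\times\C^*
\]
for every \(q>0\), even as complex Lie groups. These explicit isomorphisms can be composed with those for \(q'\).

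Assembling all bidegrees gives the statement. The only point requiring care is \(q\ge 1\): for \(q=0\) the second summand becomes \(\C/0=\C\), which is not isomorphic to \(\C^*\). This agrees with the diamond for \(N_0\).
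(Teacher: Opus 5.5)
Your proposal is correct and follows the paper's own argument. The paper also reads the theorem off the computed Hodge diamond: for \(q>0\) every entry is independent of \(q\) except \(H^{2,2}_\text{BC}(N_q;\Z)\iso \C/\Z\oplus\C/q\Z\), and it handles that entry exactly as you do, by observing \(\C/q\Z\iso\C/\Z\iso\C^*\) for \(q>0\). Your explicit map \(z\mapsto z/q\) and your care with the Tate twist \(\Z(2)\) just spell that step out.
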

Therefore, the integral Bott--Chern cohomology only distinguishes the standard Calabi--Eckmann threefold from all the other generalized ones.

\subsection{The ring structure of \(H^{*,*}_\mathrm{BC}(N_q;\Z)\)}
When a cohomology theory admits an algebra structure, its multiplicative structure typically provides a more refined invariant than the cohomology groups considered individually. In fact, the integral Bott--Chern cohomology can be endowed with such a structure by considering the natural multiplication morphism
\begin{equation*}
    \mathcal{B}^\bullet_{s,t;\Z} \otimes_{\Z} \mathcal{B}^\bullet_{s',t';\Z} \to \mathcal{B}^\bullet_{s+s',t+t';\Z}
\end{equation*}
induced by the wedge product in each degree; with a suitable choice of signs, the differentials fulfil the Leibniz rule \cite{WuXiaojun2023ItaC}. For any complex manifold \(X\), the natural multiplication of the corresponding \v{C}ech hypercocycles induces a graded ring structure on \(H^{*,*}_\text{BC}(X;\Z)\). An explicit expression of the multiplication law can be found in \cite{Schweitzer2007preprint,WuXiaojun2023ItaC}.

Moreover, the choice of signs made in the construction is such that the isomorphism as bigraded \(\Z\)-modules \(H^{s,t}_\text{BC}(X; \Z) \iso H^{s,t}_\text{BC}(X)\) identifies the abstract cup product with the wedge product of \((s,t)\)-forms. Hence, by naturality, there is a commutative diagram
\begin{lrbox}{\tikzcdBox}
    \begin{tikzcd}
        H^{s,t}_\text{BC}(X;\Z) \otimes H^{s',t'}_\text{BC}(X;\Z)  \ar[r,"\smile"] \ar[d,hook] & H^{s+s',t+t'}_\text{BC}(X;\Z) \ar[d,hook]\\
        H^{s,t}_\text{BC}(X;\C) \otimes H^{s',t'}_\text{BC}(X) \ar[r,"\wedge"] & H^{s+s',t+t'}_\text{BC}(X)
    \end{tikzcd}
\end{lrbox}
\begin{equation}
    \label{cup_product_compatibility}
    \usebox{\tikzcdBox}
\end{equation}
mapping the integral Bott--Chern cohomology ring into the usual Bott--Chern cohomology. Although generalized Calabi--Eckmann threefolds are all biholomorphically distinct, they cannot be distinguished by the integral Bott--Chern cohomology ring. Indeed, we have the following:
\begin{proposition}
    Let \(N_q, N_{q'}\) be two generalized Calabi--Eckmann threefolds with \(q,q' > 0\). Then \(H^{*,*}_\text{BC}(N_q;\Z) \iso H^{*,*}_\text{BC}(N_{q'};\Z)\) as bigraded rings.
\end{proposition}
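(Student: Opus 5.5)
The approach is to show that almost all products vanish for degree reasons or by comparison with the usual Bott--Chern ring. The whole ring structure then reduces to a single bilinear pairing \(H^{1,1}_\text{BC}(N_q;\Z) \times H^{1,1}_\text{BC}(N_q;\Z) \to H^{2,2}_\text{BC}(N_q;\Z)\), which we match between \(q\) and \(q'\) by hand.

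\textbf{Step 1: reduce to one pairing.} Reading off the diamond \labelcref{Hodge_diamond_integral_BC_N_q} for \(q>0\), the nonzero groups are:
\begin{itemize}
\item \(\Z\) in bidegrees \((0,0)\) and \((3,3)\);
\item \(\C^2\) in \((1,1)\);
\item \(\frac{\C}{\Z}\oplus\frac{\C}{q\Z}\) in \((2,2)\);
\item \(\C\) in \((3,2)\) and \((2,3)\).
\end{itemize}
Any product of two positive-degree classes whose target bidegree carries a zero group vanishes. This kills, for example, \((1,1)\cdot(3,2)\), \((2,2)\cdot(2,2)\) and everything involving total degree \(3\). The only possibly nontrivial products are therefore the unit, the pairing \((1,1)\times(1,1)\to(2,2)\), and the pairing \((1,1)\times(2,2)\to(3,3)\).

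For the last pairing we use the commutative square \labelcref{cup_product_compatibility}. By \Cref{0_and_top_BC_cohomology} and the exact sequence of \Cref{compatibility_between_BC_cohomologies}, the map \(H^{3,3}_\text{BC}(N_q;\Z)\iso\Z\to H^{3,3}_\text{BC}(N_q;\C)\iso\C\) is injective. The composite \((1,1)\times(2,2)\to(3,3)\to\C\) factors through \(H^{2,2}_\text{BC}(N_q;\C)=0\), since \(q>0\). Hence this pairing vanishes identically.

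\textbf{Step 2: compute the remaining pairing.} The map \(H^{1,1}_\text{BC}(N_q;\Z)\to H^{1,1}_\text{BC}(N_q;\C)=\C\langle w_1,w_4\rangle\) is an isomorphism. So we represent a class \(aw_1+bw_4\) in the cone model \labelcref{computingBCmodel} as \((0,\,aw_1+bw_4,\,aw_1+bw_4)\), with zero integral component. We then multiply two such cocycles using the explicit product on the cone complex (the Schweitzer/Wu formula, transported to \(\mathcal{M}\)). Because the integral components vanish, the product should be represented by the form \((aw_1+bw_4)\wedge(cw_1+dw_4)=(ad+bc-q\,bd)\,\digamma\), placed in the \(\C\)-components. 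We then reduce this in the presentation \labelcref{H22explicit}. There a complex multiple \(z\digamma\) of \(\digamma\) sitting in the form part is equivalent to \(-z\mu\), modulo \(\Z(2)\mu\), and to a combination involving \(q\Z(2)\digamma\). The outcome should be an explicit \(\C\)-bilinear form \(P_q\colon \C^2\times\C^2\to\C^2\), followed by the projection \(\C^2\to\frac{\C}{\Z(2)}\oplus\frac{\C}{q\Z(2)}\). I expect this bookkeeping to be the main obstacle. The sign conventions in the cone product have to be checked, and so does the exact placement of the product in the quotient.

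\textbf{Step 3: build the isomorphism.} We define \(f\) to be the identity on bidegrees \((0,0)\), \((3,3)\), \((3,2)\) and \((2,3)\). On \((2,2)\) we take \(\mathrm{id}\oplus m\), where \(m\colon \C/q'\Z(2)\to\C/q\Z(2)\) is multiplication by \(q/q'\). This is a group isomorphism induced by a \(\C\)-linear automorphism \(L\) of \(\C^2\). On \((1,1)\) we take a \(\C\)-linear automorphism \(g\) of \(\C\langle w_1,w_4\rangle\) chosen so that \(L\circ P_{q'}=P_q\circ(g\times g)\). The quadratic forms \(ad+bc-qbd\) are all nondegenerate and hence equivalent over \(\C\), so such a \(g\) exists once \(L\) is adjusted if necessary (for instance by precomposing with a linear map preserving both lattices up to scaling). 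Multiplicativity then holds on the single nontrivial pairing and trivially on all the others.

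For the companion statement with \(f_\C\), we take \(f_\C=g\) on \(H^{1,1}\) and the identity elsewhere. This is possible because \(H^{2,2}_\text{BC}(N_q;\C)=0\), so the square commutes automatically.
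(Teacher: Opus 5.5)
Your Step 1 is sound and is the same reduction the paper makes: everything is governed by the single pairing \(H^{1,1}_\text{BC}(N_q;\Z)\times H^{1,1}_\text{BC}(N_q;\Z)\to H^{2,2}_\text{BC}(N_q;\Z)\). Your injectivity argument for \(H^{3,3}_\text{BC}(N_q;\Z)\to H^{3,3}_\text{BC}(N_q;\C)\) is a correct detail that the paper leaves implicit.

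\textbf{Step 2.} The gap is here. This step carries the whole content of the proof, it is not carried out, and its set-up is wrong.
\begin{itemize}
\item In the cone model, a degree-\(2\) cocycle for \(\mathcal{B}^\bullet_{1,1;\Z}\) has its form part in \(\mathcal{M}^{0,1}\oplus\mathcal{M}^{1,0}\), i.e.\ it consists of potentials. It does not live in two copies of \(\mathcal{M}^{1,1}\).
\item The form part of the degree-\(4\) term for \(\mathcal{B}^\bullet_{2,2;\Z}\) is \(\mathcal{M}^{1,2}\oplus\mathcal{M}^{2,1}\). So ``\((ad+bc-qbd)\digamma\) placed in the \(\C\)-components'' is not an element of the complex; in the paper's presentation, \(\digamma\) only occurs with coefficients in \(\Z(2)\).
\item More fundamentally, for \(q>0\) the wedge of the two Bott--Chern forms is \(\del\delbar\)-exact, since \(q\digamma\in\Ima\del\delbar\). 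Hence the integral product is a secondary invariant of the potentials, not a function of \(\omega\wedge\omega'\).
\end{itemize}
Concretely, potentials give \(H^{1,1}_\text{BC}(N_q;\Z)\iso\C\bar\xi\oplus\C\xi\). Let \(\omega'=u'\del\bar\xi+v'\delbar\xi\) be the form of the second class. The Beilinson-type product of \((u\bar\xi,v\xi)\) with \((u'\bar\xi,v'\xi)\) is then represented, up to sign conventions, by
\[
(u\bar\xi\wedge\omega',\;v\xi\wedge\omega')\equiv(2uu'\,\bar\xi\wedge w_1,\;2vv'\,\xi\wedge w_1)\quad\text{in } H^{1,2}_\delbar(N_q)\oplus H^{2,1}_\del(N_q),
\]
because \(\bar\xi\wedge(w_1+iw_4)\) and \(\xi\wedge(w_1-iw_4)\) are \(\delbar\)-exact and \(\del\)-exact, respectively. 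One then reduces modulo the lattice \(\Gamma_q\) from the paper's last theorem. So the pairing is componentwise multiplication on \(\C^2\) followed by the quotient. It is not a scalar quadratic form times a fixed class, and its image is all of \(H^{2,2}\).

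\textbf{Step 3.} This is where the argument actually fails: the ``adjustment of \(L\)'' you invoke does not exist.
\begin{itemize}
\item \(\Gamma_q\) meets both coordinate axes only in \(0\). Hence the classes in \(H^{1,1}\) with uncountable annihilator are exactly those with \(u=0\) or \(v=0\), and any ring isomorphism preserves the two axes, up to swapping them.
\item On each axis the pairing is injective multiplication \(\C\times\C\to\C\). This forces \(g=c_j\varphi_j\) on the \(j\)-th axis, with \(\varphi_j\) a field automorphism.
\item The induced map on \(H^{2,2}\) is therefore \(\operatorname{diag}(c_1^2\varphi_1,c_2^2\varphi_2)\), and it must carry \(\Gamma_{q'}\) onto \(\Gamma_q\).
\item Both coordinate projections of \(\Gamma_q\) equal \(\Z+iq\Z\), up to a common factor. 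So one needs \(c_1^2(\Z+iq'\Z)=\Z+iq\Z\), i.e.\ homothetic lattices of moduli \(iq'\) and \(iq\). This forces \(q=q'\).
\end{itemize}

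\textbf{Comparison with the paper.} The paper follows the same outline. It asserts \(w_1\smile w_1=0\), \(w_1\smile w_4=y\), \(w_4\smile w_4=-qy=0\), and rescales by \(p\) with \(p^2=q'/q\). That check only covers integral multiples of \(w_1,w_4\). Even for the rank-one pairing along \(y\) that it describes, \((zw_4)\smile(z'w_4)=-qzz'y\) is preserved only if \(p^2=1\). Your choice \(m=q/q'\), together with a \(\C\)-linear \(g\) matching the two quadratic forms, would repair exactly that defect. Neither version, however, addresses the pairing computed above, and once it is computed the isomorphism you need does not exist for \(q\neq q'\).

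Separately, your closing remark on \(f_\C\) ignores the nonzero products \(\alpha\wedge w_j\) and \(\alpha\wedge\bar\alpha\) in \(H^{*,*}_\text{BC}(N_q;\C)\). So ``identity elsewhere'' is not multiplicative.
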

\begin{proof}
    From \labelcref{Hodge_diamond_integral_BC_N_q} and the compatibility between integral and usual Bott--Chern cohomologies, an isomorphism of bigraded \(\Z\)-modules \(f : H^{*,*}_\text{BC}(N_{q'};\Z) \to H^{*,*}_\text{BC}(N_{q};\Z)\) defines a ring isomorphism if it preserves the restriction of the cup product map to:
    \begin{equation*}
         H^{1,1}_\text{BC}(N_q;\Z) \otimes H^{1,1}_\text{BC}(N_q;\Z) \to H^{2,2}_\text{BC}(N_q;\Z).
    \end{equation*}
    Identifying \(\Z(s)\) with \(\Z\) in each degree, \(H^{*,*}_\text{BC}(N_q;\Z)\) can be represented as the diamond
    \begin{equation*}
        \begin{array}{ccccccc}
            &&& 1 &&& \\
            && 0 && 0 && \\
            & 0 && \left\langle w_1,w_4\right\rangle_\C && 0 & \\
            0 && 0 && 0 && 0 \\
            & 0 && \left\langle x \right\rangle_\frac{\C}{\Z} \oplus \left\langle y \right\rangle_\frac{\C}{q\Z}  && 0 & \\
            && \left\langle \bar\nu \right\rangle_\C && \left\langle \nu \right\rangle_\C && \\
            &&& \left\langle \operatorname{vol} \right\rangle_\Z &&&
        \end{array}
    \end{equation*}
    where we have adopted the more compact notation \(\left\langle\cdot\right\rangle_M \dfn \left\langle\cdot\right\rangle \otimes_\Z M\). For \(H^{*,*}_\text{BC}(N_{q'};\Z)\) we take a similar set of generators \(\{1', w_1',w_4', x',y', \bar\nu',\operatorname{vol}'\}\) together with their complex conjugates. We define an isomorphism of \(\Z\)-modules \(f : H^{*,*}_\text{BC}(N_{q};\Z) \to H^{*,*}_\text{BC}(N_{q'};\Z)\), on generators of bidegree \((s,t)\) with \(s \geq t\) as
    \begin{gather*}
        f(1) = 1',\quad f(w_1) = p w_1', \quad f(w_4) = p w_4',\\
        f(x) = x', \quad f(y) = p^2 y',\quad f(\bar\nu) = \bar\nu',\quad f(\operatorname{vol}) = \operatorname{vol}',
    \end{gather*}
    where \(p\) is such that \(p^2 = \frac{q'}{q}\). The non-trivial algebraic relations among the generators of both sets are:
    \begin{gather*}
        w_1\smile w_1 = 0,\quad  w_1 \smile w_4 = y, \quad w_4 \smile w_4 = -q y = 0 \in \left\langle y\right\rangle \otimes_\Z \frac{\C}{q \Z}.
    \end{gather*}
    The conclusion then follows observing that these are preserved by \(f\), as we now verify:
    \begin{align*}
        f(w_1 \smile w_4) &= f(y) = p^2 y' = p^2 w_1' \smile w_4' = f(w_1) \smile f(w_4),\\
        f(w_1 \smile w_1) &= 0 = p^2 w_1' \smile w_1' = f(w_1) \smile f(w_1),\\
        f(w_4 \smile w_4) &= 0 = p^2 w_4' \smile w_4' = f(w_4) \smile f(w_4).\qedhere
    \end{align*}
\end{proof}
By analogous choices, we can also construct an isomorphism of \(\C\)-algebras 
\begin{equation*}
    f_{\C} : H^{*,*}_\text{BC}(N_q;\C) \to H^{*,*}_\text{BC}(N_{q'};\C).
\end{equation*}
Indeed, to describe the \(\C\)-algebra structure of \(H^{*,*}_\text{BC}(N_q;\C)\), it is sufficient to also take in account the class \(\alpha \in H^{2,1}_\text{BC}(N_q;\C)\). This fulfils \(\alpha \wedge w_1 = \frac{\bar\nu}{q + i}\), \(\alpha \wedge w_4 = \frac{\bar\nu}{1-iq}\) and \(\alpha \wedge \bar\alpha = \frac{q}{1+q^2} \operatorname{vol}\). Therefore, extending \(f\) naturally to Bott--Chern cohomology with complex coefficients and setting
\begin{equation*}
    f_{\C}(\alpha) = \frac{1-iq'}{p(1-iq)} \alpha',\qquad f_{\C}(\bar\alpha) = \frac{1+iq'}{p(1+iq)} \bar\alpha'
\end{equation*}
we can check that
\begin{align*}
    f_\C(\alpha \wedge w_1) &= \frac{1}{q+i} f(\bar\nu) = \frac{1}{q+i} \bar\nu' = \frac{1-iq'}{1-iq} \alpha' \wedge w_1' = f_\C(\alpha) \wedge f_{\C}(w_1)\\
    f_\C(\alpha \wedge w_4) &= \frac{1}{1-iq} f(\bar\nu) = \frac{1}{1-iq} \bar\nu' = \frac{1-iq'}{1-iq} \alpha' \wedge w_1' = f_\C(\alpha) \wedge f_{\C}(w_4)\\
    f_\C(\alpha \wedge \bar\alpha) &= \frac{q}{1+q^2} f(\operatorname{vol}) = \frac{q}{1+q^2} \operatorname{vol}' = \frac{q (1+q'^2)}{q'(1+q^2)} \alpha' \wedge \bar\alpha' = f(\alpha) \wedge f(\bar\alpha).
\end{align*}
Therefore:
\begin{theorem}
    \label{BC_ring_iso_and_algebraa}
    If both \(q,q' > 0\), there exist a ring isomorphism \(f : H^{*,*}_\text{BC}(N_{q'};\Z) \to H^{*,*}_\text{BC}(N_{q};\Z)\) and a \(\C\)-algebra isomorphism \(f_{\C} : H^{*,*}_\text{BC}(N_q;\C) \to H^{*,*}_\text{BC}(N_{q'};\C)\) such that the square
    \begin{equation*}
        \begin{tikzcd}
            H^{*,*}_\text{BC}(N_{q'};\Z) \ar[r,"f","\iso"'] \ar[d] & H^{*,*}_\text{BC}(N_{q};\Z) \ar[d]\\
            H^{*,*}_\text{BC}(N_{q'};\C) \ar[r,"f_\C","\iso"'] & H^{*,*}_\text{BC}(N_{q};\C)
        \end{tikzcd}
    \end{equation*}
    whose vertical arrows are the natural maps induced by \(\mathcal{B}^\bullet_{s,t;\Z} \hookrightarrow \mathcal{B}^\bullet_{s,t;\C}\), commutes.
\end{theorem}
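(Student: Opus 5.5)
Compatibility of the square follows once we take \(f\) to be the ring isomorphism constructed in the preceding proposition (inverted to match the direction of the statement) and \(f_\C\) to be the \(\C\)-linear extension described right after it. The square then commutes on generators, and we check this bidegree by bidegree using the Hodge diamond \labelcref{Hodge_diamond_integral_BC_N_q} together with the complex one computed in \Cref{cohomology_GCE}.

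First, we determine the vertical map \(H^{s,t}_\text{BC}(N_q;\Z)\to H^{s,t}_\text{BC}(N_q;\C)\) in each bidegree.
\begin{itemize}
\item For \(s+t\in\{1,2,5\}\) it is an isomorphism, as shown earlier from the long exact sequence.
\item In total degree \(3\) the source vanishes for \(q>0\), so there is nothing to check.
\item In bidegrees \((0,0)\) and \((3,3)\) it is the inclusion \(\Z\hookrightarrow\C\), sending \(1\mapsto 1\) and \(\operatorname{vol}\mapsto\operatorname{vol}\); the latter follows from the proof of \Cref{0_and_top_BC_cohomology}.
\item In bidegree \((2,2)\) the target \(H^{2,2}_\text{BC}(N_q;\C)\) vanishes for \(q>0\), so the map is zero. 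In bidegrees \((3,1)\) and \((1,3)\) both groups vanish.
\end{itemize}
Thus the only bidegrees where commutativity is not automatic are \((1,1)\), \((2,1)\), \((1,2)\), \((3,2)\), \((2,3)\) and the extremes.

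Next, we define \(f_\C\) so that on the image of the integral classes it agrees with \(f\). Concretely, \(f_\C\) acts on the vertical images of the integral classes as follows:
\begin{itemize}
\item it multiplies \(w_1\) and \(w_4\) by the same scalar \(p\) as \(f\) does (using \(p^2=q'/q\), or its inverse according to the direction of the arrows);
\item it fixes \(1\), \(\nu\), \(\bar\nu\) and \(\operatorname{vol}\).
\end{itemize}
We then require \(f_\C\) to be multiplicative, which forces its values on \(\alpha\) and \(\bar\alpha\) up to the relations \(\alpha\wedge w_1=\bar\nu/(q+i)\), \(\alpha\wedge w_4=\bar\nu/(1-iq)\) and \(\alpha\wedge\bar\alpha=\tfrac{q}{1+q^2}\operatorname{vol}\). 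Since the vertical maps are isomorphisms in bidegrees \((1,1)\), \((3,2)\) and \((2,3)\), and are the natural inclusions in bidegrees \((0,0)\) and \((3,3)\), commutativity there holds by construction. In bidegree \((2,2)\) both compositions are zero. It remains to check that \(f_\C\) is a well-defined \(\C\)-algebra isomorphism. We do this by verifying every product relation among the generators \(1,w_1,w_4,\alpha,\bar\alpha,\nu,\bar\nu,\operatorname{vol}\) in \(H^{*,*}_\text{BC}(N_q;\C)\), as in the computation preceding the statement. In that verification we use that \(w_1\wedge w_1\), \(w_1\wedge w_4\) and \(w_4\wedge w_4\) all vanish in the complex Bott--Chern ring when \(q>0\), because \(h^{2,2}_\text{BC}=0\).

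The main obstacle is the consistency of the scalings of \(\alpha\), \(\bar\alpha\) and \(\operatorname{vol}\). The scaling of \(\alpha\) is forced by the products with \(w_1\) and \(w_4\). The two coefficients \(1/(q+i)\) and \(1/(1-iq)\) must scale compatibly, which works because \((q+i)=i(1-iq)\). But \(\alpha\wedge\bar\alpha\) then acquires the factor \(\frac{(1+q'^2)}{p^2(1+q^2)}\) relative to \(\operatorname{vol}'\), whereas \(f(\operatorname{vol})=\operatorname{vol}'\) is forced by the integral square. To reconcile these we would use the remaining freedom: rescale \(\alpha\) by a unit complex phase together with a modulus adjustment, or rescale \(w_1,w_4\) by independent factors compatible with integrality of \(f\). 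Since \(\C/\Z\) and \(\C/q\Z\) are divisible, the integral \((2,2)\) part imposes no obstruction and \(f\) has that flexibility. If the two constraints turned out to be incompatible, the fallback is to change \(f\) on \(\nu\) and \(\bar\nu\) by the complex factor forced by the \(\alpha\)-relations. This is still compatible with the square, because the vertical maps in bidegrees \((3,2)\) and \((2,3)\) are isomorphisms of \(\C\)-vector spaces, so any \(\C\)-linear choice there lifts uniquely to \(f\).
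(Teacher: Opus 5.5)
Your strategy is the paper's. You take \(f\) from the preceding proposition and extend it to \(f_\C\) on the vertical images. The relations \(\alpha\wedge w_1=\bar\nu/(q+i)\) and \(\alpha\wedge w_4=\bar\nu/(1-iq)\) then force \(f_\C(\alpha)=\frac{1-iq'}{p(1-iq)}\alpha'\) and, conjugating, \(f_\C(\bar\alpha)=\frac{1+iq'}{p(1+iq)}\bar\alpha'\). Your bidegree-by-bidegree check of the square is more explicit than the paper, which leaves commutativity implicit in ``extending \(f\) naturally''. Note, though, that \((2,1)\) and \((1,2)\) are automatic, since you already observed that the integral groups vanish there.

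The gap is that you never settle what you call the main obstacle, and you misstate it. The factor \(\frac{1+q'^2}{p^2(1+q^2)}\) multiplies \(\alpha'\wedge\bar\alpha'\), not \(\operatorname{vol}'\). Using \(\alpha'\wedge\bar\alpha'=\frac{q'}{1+q'^2}\operatorname{vol}'\) and \(p^2=q'/q\),
\[
f_\C(\alpha)\wedge f_\C(\bar\alpha)=\frac{1+q'^2}{p^2(1+q^2)}\cdot\frac{q'}{1+q'^2}\operatorname{vol}'=\frac{q'}{p^2(1+q^2)}\operatorname{vol}'=\frac{q}{1+q^2}\operatorname{vol}'=f_\C(\alpha\wedge\bar\alpha).
\]
So the constraint holds exactly and no extra freedom is needed. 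The choice \(p^2=q'/q\) that makes \(y\mapsto p^2y'\) an isomorphism \(\C/q\Z\to\C/q'\Z\) is precisely what reconciles the scaling of \(\alpha\wedge\bar\alpha\) with \(f(\operatorname{vol})=\operatorname{vol}'\). This is the last line of the paper's verification. Without it, your argument ends on a conditional rather than a proof.

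Your fallbacks would not all have rescued it.
\begin{itemize}
\item Rescaling \(\alpha\) is impossible once \(f_\C(w_1)\), \(f_\C(w_4)\) and \(f_\C(\bar\nu)\) are fixed, as you note yourself.
\item Independent factors \(a,b\) on \(w_1,w_4\) are also ruled out. The two \(\alpha\)-relations give \(f_\C(\alpha)=\frac{q'+i}{a(q+i)}\alpha'\) and \(f_\C(\alpha)=\frac{1-iq'}{b(1-iq)}\alpha'\). Since \(q+i=i(1-iq)\) and \(q'+i=i(1-iq')\), these agree only if \(a=b\).
\item Rescaling \(f\) on \(\bar\nu,\nu\) by factors \(\kappa,\kappa'\) with \(\kappa\kappa'=p^2q/q'\) would work. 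For \(q>0\) no nontrivial integral cup product starts from or lands in bidegrees \((3,2),(2,3)\), and the vertical maps there are isomorphisms, so \(f\) stays a ring isomorphism and the square still commutes. You did not show this, and it is unnecessary, since \(p^2q/q'=1\).
\end{itemize}
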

This result reveals a peculiar behavior of the cohomology of generalized Calabi--Eckmann threefolds. Indeed, while we know that there is no biholomorphism \(N_q \to N_{q'}\) for \(q \neq q'\), none of the cohomological theories considered here, when taken individually, detects the non-existence of such a biholomorphism. Even the integral Bott--Chern cohomology, which seemed the best candidate to detect such a difference in the complex structure fails to do so. The reason is that, apart from those of total degree \(0\) or \(6\), all the integral Bott--Chern cohomology groups are divisible;  as divisible abelian groups admit no canonical discrete subgroup, they are rich in automorphisms that can be exploited to construct isomorphisms like the one above. As we shall see, a finer invariant can be extracted by considering these cohomological theories not individually, but in relation to one another.

\begin{remark}
    We may ask whether a refined version of the Universal Coefficients Theorem can be exploited to recover information about the map
    \begin{equation*}
        H^{s,t}_\text{BC}(X;\Z) \longrightarrow H^{s,t}_\text{BC}(X;\C)
    \end{equation*}
    connecting the integral Bott--Chern cohomology of a complex manifold \(X\) with the usual Bott--Chern cohomology. The answer is surprisingly nontrivial. Indeed, on a compact complex manifold the sheaf (hyper)cohomology functor commutes with all filtered colimits \cite[Thm.~4.12.1]{Godement1998}, thus we can apply the result in \cite{Kahn2024preprint} to obtain such a generalization of the Universal Coefficients Theorem. Nevertheless, this does not behave as one might naively expect. Indeed, for the Bott--Chern complex, we obtain the short exact sequence
    \begin{equation*}
        \begin{tikzcd}
            0 \ar[r] & \mathbb{H}^k(X, \mathcal{B}^\bullet_{s,t;\Z}) \otimes_\Z \C \ar[r] &  \mathbb{H}^k(X, \mathcal{B}^\bullet_{s,t;\Z} \otimes_\Z \C ) \ar[r] & \mathbb{H}^{k+2}(X,\operatorname{Tor}(\mathcal{B}^\bullet_{s,t;\Z},\C)) = 0
        \end{tikzcd}
    \end{equation*}
    where the last term vanishes as \(\C\) is flat over \(\Z\). However, \(\mathcal{B}^\bullet_{s,t;\Z} \otimes_\Z \C\) is very far from being quasi-isomorphic to \(\mathcal{B}^\bullet_{s,t;\C}\). This can be observed by computing, for example, the first cohomology sheaf:
    \begin{equation*}
        \mathcal{H}^1(\mathcal{B}^\bullet_{s,t;\C}) = \frac{\ker (\OO \oplus \overline{\OO} \xrightarrow{\del \oplus \delbar} \Omega^1 \oplus \overline{\Omega^1})}{\Ima (\underline\C \xrightarrow{(1,-1)} \OO \oplus \overline{\OO})} \iso \frac{\underline{\C}^2}{\underline\C(e_1 - e_2)} \iso \underline\C;
    \end{equation*}
    whose stalks are thus \(1\)-dimensional complex vector spaces. On the other hand,
    \begin{equation*}
        \mathcal{H}^1(\mathcal{B}^\bullet_{s,t;\Z}) = \frac{\ker (\OO \oplus \overline{\OO} \xrightarrow{\del \oplus \delbar} \Omega^1 \oplus \overline{\Omega^1})}{\Ima (\underline{\Z(s)} \xrightarrow{(1,-1)}\OO \oplus \overline{\OO})} \iso \frac{\underline\C^2}{\underline\Z(e_1-e_2)}
    \end{equation*}
    as \(\Z\)-modules. Since \((-) \otimes_\Z \C\) is exact, one has
    \begin{equation*}
        \mathcal{H}^1(\mathcal{B}^\bullet_{s,t;\Z} \otimes_\Z \C) \iso \mathcal{H}^1(\mathcal{B}^\bullet_{s,t;\Z}) \otimes_\Z \C \iso \frac{\underline\C^2 \otimes_\Z \C}{\underline\Z(e_1-e_2) \otimes_\Z \C}.
    \end{equation*}
    Since \(\C^2 \otimes_{\Z} \C\) is a complex vector space whose dimension is the cardinality of the continuum, while \(\Z(e_1-e_2) \otimes_\Z \C\) is a complex line, the stalks of \(\mathcal{H}^1(\mathcal{B}^\bullet_{s,t;\Z} \otimes_\Z \C)\) cannot be isomorphic to those of \(\mathcal{H}^1(\mathcal{B}^\bullet_{s,t;\C})\); this obstructs any quasi-isomorphism between \(\mathcal{B}^\bullet_{s,t;\Z} \otimes_\Z \C\) and \(\mathcal{B}^\bullet_{s,t;\C}\).    
\end{remark}
This is the kind of problem that the so-called \emph{Condensed Mathematics} (see, e.g. \cite{Scholze2026preprintCondensedMath}) aims to solve. In this approach to mathematics, the categories of topological objects (such as topological spaces, topological groups, etc.) are replaced by their condensed counterparts, which are better behaved from a categorical perspective. For example, while the category of topological abelian groups is not abelian, that of \emph{condensed} abelian groups is. Even though the theory of condensed sheaves over analytic spaces is still a matter of research, it seems reasonable that, in the condensed setting, a change of scalars like the one described above behaves as expected:
namely, that \(\mathcal{B}^\bullet_{s,t;\C}\) can be recovered from \(\mathcal{B}^\bullet_{s,t;\Z}\) via extension of scalars, through an adjoint equivalence involving the condensed tensor product. This is impossible in the standard algebraic setting, since, as we have seen, the restriction of scalars completely destroys the `continuous structure' of complex vector spaces, as evidenced by the failure of \(\mathcal{B}^\bullet_{s,t;\Z} \otimes_\Z \C \simeq \mathcal{B}^\bullet_{s,t;\C}\) shown above.

\subsection{A cohomological invariant for Generalized Calabi--Eckmann threefolds}
The fact that none of the cohomological theories we have investigated distinguishes generalized Calabi--Eckmann threefolds when considered alone does not imply that a combination thereof must fail as well. Indeed, by its very definition, integral Bott--Chern cohomology provides a natural bridge between the other cohomological theories we introduced above. In fact, the integral Bott--Chern complex fits into the following exact sequence of chain complexes of sheaves
\begin{equation*}
    \begin{tikzcd}
        0 \ar[r] & (\sigma_s\Omega^\bullet \oplus \sigma_t\overline{\Omega^\bullet})[1] \ar[r] & \mathcal{B}^\bullet_{s,t;\Z} \ar[r] & \Z(s) \ar[r] & 0
    \end{tikzcd}
\end{equation*}
which can then be used to enrich the structure of integral Bott--Chern cohomology groups. In fact, in the specific case of generalized Calabi--Eckmann threefolds, we can consider the following piece of the corresponding long exact sequence in hypercohomology:
\begin{equation*}
    \begin{tikzcd}[column sep = tiny]
        \mathbb{H}^3(N_q,\mathcal{B}^\bullet_{2,2;\Z}) \ar[r] & H^3(N_q;\Z(2)) \ar[r, "c"] & \mathbb{H}^3(N_q,\sigma_2\Omega^\bullet \oplus \sigma_2\overline{\Omega^\bullet}) \ar[r] & H^{2,2}_\text{BC}(N_q;\Z) \ar[r] & H^4(N_q;\Z(2)).
    \end{tikzcd}
\end{equation*}
We can use \labelcref{computingBCmodel} to compute \(\mathbb{H}^3(N_q,\mathcal{B}^\bullet_{2,2;\Z})\): this time the problem reduces to computing the cohomology of the complex
\begin{equation*}
    A^\bullet = A^2 \xrightarrow{d_A^2} A^3 \xrightarrow{d_A^3} A^4
\end{equation*}
where,
\begin{align*}
    A^2 &= \mathcal{M}_{\Z(2)}^2 \oplus (\mathcal{M}^{0,1} \oplus \mathcal{M}^{1,0}) \oplus (\mathcal{M}^{1,0} \oplus \mathcal{M}^{0,1})\\
    A^3 &= \mathcal{M}_{\Z(2)}^3 \oplus \mathcal{M}^{1,1} \oplus \mathcal{M}^{1,1}\\
    A^4 &= \mathcal{M}_{\Z(2)}^4 \oplus \mathcal{M}^{1,2} \oplus \mathcal{M}^{2,1}
\end{align*}
and the differentials are given as the matrices
\begin{equation*}
    d_A^2 =
    \begin{bmatrix}
        -d_{\Z} & 0 & 0\\
        -\pi^{1,1} & (\bar\delta,\delta) & 0\\
        \pi^{1,1} & 0 & (\bar\delta,\delta)
    \end{bmatrix},
    \quad
    d_A^3 =
    \begin{bmatrix}
        -d_{\Z} & 0 & 0\\
        -\pi^{1,2} & \delta & 0\\
        \pi^{2,1} & 0 & \bar\delta
    \end{bmatrix}
\end{equation*}
as \(\delta(\mathcal{M}^{0,1}) = 0\) and \(\bar\delta(\mathcal{M}^{1,0}) = 0\). Therefore,
\begin{align*}
    A^2 &= \left\langle w_1, w_4, \theta_1 \wedge \theta_2\right\rangle \otimes_\Z \otimes \Z(2) \oplus \left\langle \xi, \bar\xi \right\rangle \otimes_\Z \C  \oplus\left\langle \xi, \bar\xi \right\rangle \otimes_\Z \C, \\
    A^3 &= \left\langle \theta_1 \wedge w_1, \theta_1 \wedge w_4, \theta_2 \wedge w_1, \theta_2 \wedge w_4 \right\rangle \otimes_\Z \Z(2) \oplus \left\langle w_1,w_4,\xi\wedge\bar\xi \right\rangle \otimes_\Z \C \oplus \left\langle w_1,w_4,\xi\wedge\bar\xi \right\rangle \otimes_\Z \C.
\end{align*}
To keep track of the repeated summands in \(A^3\) we introduce the symbols \(\epsilon, \varepsilon_1, \varepsilon_2\): the first spans \(\Z(2)\) as a \(\Z\)-module, while the others span \(\C\) as a complex vector space. Thus,
\begin{align*}
    \Ima d^2_A &= \Big\langle -w_1 \varepsilon_1 + w_1 \varepsilon_2, -w_4 \varepsilon_1 + w_4 \varepsilon_2,\\
    &\qquad (\theta_1 \wedge w_4 - \theta_2 \wedge w_1)\epsilon - \left(\frac{i}{2} \xi \wedge \bar\xi\right) \varepsilon_1 + \left(\frac{i}{2} \xi \wedge \bar\xi\right) \varepsilon_2 \Big\rangle \otimes_\Z \Z(2)\\
    &\quad \oplus \left\langle w_1 \varepsilon_1, w_4 \varepsilon_1, w_1 \varepsilon_2, w_4 \varepsilon_2 \right\rangle \otimes_\Z \otimes \C
\end{align*}
The cocycles, on the other hand, are given by
\begin{align*}
    \ker d_A^3 &= \left\langle (\theta_1 \wedge w_4 - \theta_2 \wedge w_1)\epsilon - \left(\frac{i}{2} \xi \wedge \bar\xi\right) \varepsilon_1 + \left(\frac{i}{2} \xi \wedge \bar\xi\right) \varepsilon_2 \right\rangle \otimes_\Z \Z(2) \oplus \left\langle \varpi_q \right\rangle \otimes_\Z \Z(2)\\
    &\qquad \oplus \left\langle w_1 \varepsilon_1, w_4\varepsilon_1, w_1 \varepsilon_2, w_4\varepsilon_2 \right\rangle \otimes_\Z \C
\end{align*}
with
\begin{equation*}
    \varpi_q =
    \begin{cases}
        (\theta_1 \wedge w_1 + \theta_2 \wedge w_4)\epsilon + \left(\frac{1}{2} \xi \wedge \bar\xi\right) \varepsilon_1 + \left(\frac{1}{2} \xi \wedge \bar\xi\right) \varepsilon_2 & \text{if } q = 0\\
        0 & \text{if } q > 0
    \end{cases}
\end{equation*}
Standard computations then yield
\begin{equation*}
    \mathbb{H}^3(N_q,\mathcal{B}^\bullet_{2,2;\Z}) \iso \frac{\ker d_A^3}{\Ima d_A^2} \iso 
    \begin{cases}
        \left\langle\varpi_0\right\rangle \otimes_{\Z} \Z(2) & \text{if } q = 0\\
        0 & \text{if } q > 0;
    \end{cases}
\end{equation*}
depending on the \(d\)-closedness of \(\varpi_q\). 

In particular, \(\ker c = 0\) for \(q > 0\); therefore, since \(H^4(N_q;\Z) = \{0\}\) and 
\begin{equation*}
    \mathbb{H}^3(N_q,\sigma_2\Omega^\bullet \oplus \sigma_2\overline{\Omega^\bullet}) \iso \mathbb{H}^3(N_q;\sigma_2\Omega^\bullet) \oplus \mathbb{H}^3(N_q;\sigma_2\overline{\Omega^\bullet}) \iso H^{1,2}_{\delbar}(N_q) \oplus H^{2,1}_{\del}(N_q) \iso \C^2,
\end{equation*}
the sequence above collapses to
\begin{equation*}
    \begin{tikzcd}
        0 \ar[r] & H^3(N_q;\Z) \ar[r, "c"] & H^{1,2}_{\delbar}(N_q) \oplus H^{2,1}_{\del}(N_q) \ar[r] & H^{2,2}_\text{BC}(N_q;\Z) \ar[r] & 0.
    \end{tikzcd}
\end{equation*}
On the other hand, for \(q = 0\), the projection \(\mathcal{B}^\bullet_{2,2;\Z} \to \Z(2)\) induces the map in cohomology
\begin{equation*}
    \mathbb{H}^3(N_q,\mathcal{B}^\bullet_{2,2;\Z}) \to H^3(N_q;\Z(2)) \iso H^3(N_q;\Z) \otimes_{\Z} \Z(2)
\end{equation*} 
that projects any hypercocycle onto its \(\epsilon\)-part. Thus, in this case, \(\ker c = \left\langle \theta_1 \wedge w_1 + \theta_2 \wedge w_4 \right\rangle \otimes_\Z \Z(2)\). In both cases, one can verify that the resulting group is consistent with the computation in \Cref{H22explicit}. In fact, for \(q > 0\), from the exact sequence we have
\begin{equation*}
    H^{2,2}_\text{BC}(N_q;\Z) \iso \frac{H^{1,2}_{\delbar}(N_q) \oplus H^{2,1}_{\del}(N_q)}{H^3(N_q;\Z)} \iso \frac{\C \oplus \C}{\Z^2},
\end{equation*}
that is isomorphic to \(\frac{\C^2}{\Z(2) \oplus q\Z(2)}\). On the other hand, for \(q = 0\), \(\Ima c\) has rank \(1\), whence
\begin{equation*}
    H^{2,2}_\text{BC}(N_0;\Z) \iso \frac{H^{1,2}_{\delbar}(N_0) \oplus H^{2,1}_{\del}(N_0)}{\Ima c} \iso \C \oplus \frac{\C}{\Z},
\end{equation*}
as was already established. However, the exact sequence above enriches the structure of the integral Bott--Chern cohomology in such a way that \(H^{2,2}_\text{BC}(N_q;\Z)\) can be assigned a characteristic torsion class. This is described in the proof of the following result.
\begin{theorem}
    \label{GCE_are_distinct_cohomological}
    Let \(N_q\) and \(N_{q'}\) be two generalized Calabi--Eckmann threefolds; let \(c,c'\) be the maps in cohomology constructed as above. Then \(c \iso c'\) if and only if \(q = q'\).
\end{theorem}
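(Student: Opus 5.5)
I read \(c \iso c'\) as an isomorphism of arrows compatible with the natural structure of the target. Concretely, it is a pair \((\phi,\psi)\) with \(\phi : H^3(N_q;\Z) \to H^3(N_{q'};\Z)\) a group isomorphism and \(\psi\) an isomorphism \(H^{1,2}_{\delbar}(N_q) \oplus H^{2,1}_{\del}(N_q) \to H^{1,2}_{\delbar}(N_{q'}) \oplus H^{2,1}_{\del}(N_{q'})\), such that \(\psi \circ c = c' \circ \phi\). I require \(\psi\) to be \(\C\)-linear on the first summand and to commute with the conjugation exchanging the two summands. This is the structure that \(c\) itself carries, because \(c\) is induced by \(\Z(2) \xrightarrow{Z} \OO \oplus \overline{\OO}\) and hence by \(x \mapsto (x^{1,2}, x^{2,1})\), with the two components conjugate up to the sign built into \(Z\). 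The plan is to attach to \(c\) a lattice in \(\C\) up to homothety, and to show that this datum determines \(q\).

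\textbf{Step 1 (the case \(q=0\)).} If exactly one of \(q,q'\) vanishes, the maps are distinguished by their kernels. Indeed \(\ker c \neq 0\) for \(q=0\), since it is spanned by \(\theta_1\wedge w_1+\theta_2\wedge w_4\) as computed above. On the other hand \(\ker c = 0\) for \(q>0\). So from now on I assume \(q,q'>0\).

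\textbf{Step 2 (reduction to a lattice in \(\C\)).} Because \(\psi\) commutes with conjugation, the pair \((\phi,\psi)\) is determined by a \(\C\)-linear isomorphism \(a : H^{1,2}_{\delbar}(N_q) \to H^{1,2}_{\delbar}(N_{q'})\), i.e.\ by a nonzero complex scalar once generators are fixed. Let \(u\) be the first component of \(c\). Then the condition \(\psi\circ c = c'\circ\phi\) becomes \(a\,u(H^3(N_q;\Z)) = u'(H^3(N_{q'};\Z))\). Hence \(c\iso c'\) if and only if the subgroups \(L_q = u(H^3(N_q;\Z)) \subset H^{1,2}_{\delbar}(N_q)\cong\C\) are homothetic. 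Injectivity of \(c\) together with the torus structure \(H^{2,2}_{\text{BC}}(N_q;\Z) \iso \C^2/\Z^2\) (from the short exact sequence) shows that \(L_q\) is a genuine rank-\(2\) lattice. So its homothety class is a point \(\tau_q\in\mathbb H/\mathrm{SL}_2(\Z)\).

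\textbf{Step 3 (computing \(\tau_q\)).} Write \(\kappa_q = \frac{2+iq}{2(q^2+1)}\), so that \cref{expressions_of_alpha} reads \([\alpha] = a + \kappa_q b\) and \([\bar\alpha] = a+\bar\kappa_q b\). Here \(a=\theta_1\wedge w_1\) and \(b=(q\theta_1+\theta_2)\wedge w_4\) are the integral generators from \Cref{real_cohomology_model}. I invert this relation and take the \(\bar\alpha\)-component, which spans \(H^{1,2}_{\delbar}(N_q)\). The images of \(a\) and \(b\) are proportional to \(\kappa_q\) and \(-1\). Therefore \(L_q \sim \Z+\kappa_q\Z\), and \(\tau_q\) is the \(\mathrm{SL}_2(\Z)\)-class of \(\kappa_q\), up to the sign convention in \(Z\), which does not affect the argument.

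\textbf{Step 4 (main obstacle: distinguishing the classes).} The hardest step is showing that the classes \([\kappa_q]\) are pairwise \(\mathrm{SL}_2(\Z)\)-inequivalent. My first attempt is through discriminants. The number \(\kappa_q\) is imaginary quadratic and is a root of
\[
4(1+q^2)^2x^2-4(1+q^2)x+(4+q^2)=0 .
\]
The discriminant of the associated primitive binary quadratic form is an \(\mathrm{SL}_2(\Z)\)-invariant of \(\Z+\kappa_q\Z\) up to homothety. That discriminant equals \(-16(1+q^2)^2(3+q^2)/g^2\), where \(g=\gcd\bigl(4(1+q^2)^2,\,4(1+q^2),\,4+q^2\bigr)\). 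One checks that \(g\) divides \(\gcd(12,4+q^2)\), so \(g\) is bounded. The remaining task is to verify that the resulting discriminants are pairwise distinct. This follows from the growth of \((1+q^2)^2(3+q^2)\) together with a short case check of the finitely many possible values of \(g\). Distinct discriminants give distinct \(\tau_q\), hence \(c\not\iso c'\) for \(q\neq q'\). The converse direction is trivial, since for \(q=q'\) one takes \(\phi\) and \(\psi\) to be identities.
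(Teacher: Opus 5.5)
Your reduction in Steps 1--2 is sound and takes a different route from the paper. However, Steps 3--4 as written do not prove the theorem, because the numbers that enter the final comparison are wrong in two independent places.

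First, the coefficient in \cref{expressions_of_alpha} is not correct as printed. Use $\alpha=\xi\wedge\bigl(w_1-\frac{i}{1-iq}w_4\bigr)$ and $\xi=\theta_1+i\theta_2$. Use also $\theta_2\wedge w_1\equiv\theta_1\wedge w_4$, which holds because $d(\theta_1\wedge\theta_2)=\theta_2\wedge w_1-\theta_1\wedge w_4$. One gets
\[
[\alpha]_{\mathrm{dR}}=\theta_1\wedge w_1+\frac{1}{1-iq}\,(q\theta_1+\theta_2)\wedge w_4 ,
\]
so $\kappa_q=\frac{1+iq}{1+q^2}$, not $\frac{2+iq}{2(1+q^2)}$. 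As a check, this value gives $[\alpha]\smile[\bar\alpha]=(\bar\kappa_q-\kappa_q)\operatorname{vol}=-\frac{2iq}{1+q^2}\operatorname{vol}$. A direct computation in the model agrees: $\alpha\wedge\bar\alpha=\frac{q}{1+q^2}\,\xi\wedge\bar\xi\wedge w_1\wedge w_4=-\frac{2iq}{1+q^2}\operatorname{vol}$. The printed coefficient would give half of that.

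Second, even for your $\kappa_q$ the quadratic equation is wrong. With $y=2(1+q^2)x$ the relation is $(y-2)^2+q^2=0$. So the linear coefficient is $-8(1+q^2)$, not $-4(1+q^2)$, and the discriminant is $-16q^2(1+q^2)^2/g^2$, not $-16(1+q^2)^2(3+q^2)/g^2$. The discriminants you propose to compare are therefore not invariants of $L_q$. The ``growth plus case check'', which you leave undone, concerns the wrong numbers.

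The repair is short and avoids \cref{expressions_of_alpha} altogether: compute the first component $u(x)=[\pi^{1,2}x]_{\delbar}$ directly. Since $\delta(\xi\wedge\bar\xi)=\bar\xi\wedge(w_1+iw_4)$, one has $[\bar\xi\wedge w_4]=i[\bar\xi\wedge w_1]$ in $H^{1,2}_{\delbar}(N_q)$. Hence $u(\theta_1\wedge w_1)=\frac12[\bar\xi\wedge w_1]$ and $u\bigl((q\theta_1+\theta_2)\wedge w_4\bigr)=\frac{iq-1}{2}[\bar\xi\wedge w_1]$. These are exactly the first coordinates of the paper's generators of $\Gamma_q$. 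So $L_q$ is homothetic to $\Z+iq\Z$. This also gives discreteness, which does not follow from the abstract group isomorphism $H^{2,2}_{\mathrm{BC}}(N_q;\Z)\iso\C^2/\Z^2$ you invoke in Step 2. Then $\tau_q=iq$ lies in the standard fundamental domain, and the associated primitive form $x^2+q^2y^2$ has discriminant $-4q^2$, pairwise distinct for $q\ge 1$. Your $\bar\alpha$-component route gives the same answer once $\kappa_q=1/(1-iq)$ is used, since $\Z+\kappa_q\Z=\kappa_q\bigl(\Z+(1-iq)\Z\bigr)$.

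With this correction your argument is a valid alternative to the paper's. The paper fixes the basis $\{\frac12\bar\xi\wedge w_1,\frac12\xi\wedge w_1\}$ of the target and compares $|\det M_{\Gamma_q}|=2q$. That is only meaningful if the isomorphism on the target is essentially trivial in that basis, since rescaling the target by $a\in\C^*$ multiplies this quantity by $|a|^2$. Your conjugation-compatible notion of isomorphism allows such rescalings, so you need the scale-invariant datum, the homothety class of $L_q$. In exchange you get a statement that does not depend on a choice of basis. Some restriction on $\psi$ is unavoidable: for arbitrary $\C$-linear $\psi$, all $c$ with $q>0$ are isomorphic, because $\det M_{\Gamma_q}=2iq\neq 0$ means $\Gamma_q$ spans $\C^2$ over $\C$.
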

\begin{proof}
    We consider the short exact sequence
    \begin{equation*}
        \begin{tikzcd}
            0 \ar[r] & \operatorname{Coim}c \ar[r] & H^{1,2}_{\delbar}(N_q) \oplus H^{2,1}_{\del}(N_q) \ar[r] & H^{2,2}_\text{BC}(N_q;\Z) \ar[r] & 0,
        \end{tikzcd}
    \end{equation*}
    where 
    \begin{equation*}
        \operatorname{Coim}c = \frac{H^3(N_q;\Z(2))}{\ker c} \iso
        \begin{cases}
            \left\langle \theta_1 \wedge w_1 \right\rangle & \text{if } q = 0,\\
            \left\langle \theta_1 \wedge w_1, (q\theta_1 + \theta_2) \wedge w_4 \right\rangle & \text{if } q > 0.
        \end{cases}
    \end{equation*}
    Since \(c = (\pi^{1,2}, - \pi^{2,1})\) and \(\theta_1 \wedge w_4 - \theta_2 \wedge w_1\) is \(d\)-exact, the coimage is mapped through \(c\) to the group
    \begin{equation*}
        c(\operatorname{Coim} c) = \left\langle \frac{1}{2} \bar\xi \wedge w_1 - \frac{1}{2} \xi \wedge w_1, \frac{1}{2}(iq -1) \bar\xi \wedge w_1 + \frac{1}{2} (iq + 1) \xi \wedge w_1\right\rangle \subseteq H^{1,2}_{\delbar}(N_q) \oplus H^{2,1}_{\del}(N_q).
    \end{equation*}
    We fix \(\{\frac{1}{2} \bar\xi \wedge w_1, \frac{1}{2} \xi \wedge w_1\}\) as a basis for \(H^{1,2}_{\delbar}(N_q) \oplus H^{2,1}_{\del}(N_q) \iso \C^2\); with respect to it, we can see \(H^{2,2}_\text{BC}(N_q,\Z)\) as the quotient \(\C^2/\Gamma_q\) where \(\Gamma_q\) is the \(\Z\)-module generated by \( \{e_1 - e_2, (iq-1) e_1 + (iq + 1) e_2\}\). The isomorphism of maps \(c \iso c'\), yields an isomorphism of their images \(\Gamma_q\) and \(\Gamma_{q'}\) as subgroups of \(\C^2\). This may happen only if there exists \(A \in \GL(2,\Z[i])\) such that
    \begin{equation*}
        M_{\Gamma_{q'}} \dfn
        \begin{bmatrix}
            1 & iq' - 1\\
            -1 & iq' + 1
        \end{bmatrix}
        =
        \begin{bmatrix}
            1 & iq - 1\\
            -1 & iq + 1
        \end{bmatrix}
        A \nfd M_{\Gamma_q} A.
    \end{equation*}
    Since \(|\det A| = 1\), 
    \begin{equation*}
        4 q'^2 = |\det M_{\Gamma_{q'}}|^2 = |\det M_{\Gamma_{q}}|^2 = 4 q^2,
    \end{equation*}
    whence \(q = q'\) since they are both taken non-negative. The other implication is trivial.
\end{proof}

\printbibliography

\noindent\begin{minipage}{\linewidth}
    \begin{center}
        \small
        \rule{4cm}{.5pt}
        \bigskip
        
        Università degli Studi di Firenze, Dipartimento di Matematica e Informatica ``Ulisse Dini'',\\ V.le Morgagni 67/a, 50134 Firenze, Italia
        
        email: \href{mailto:federico.thiella@unifi.it}{federico.thiella@unifi.it}
    \end{center}
\end{minipage}

\end{document}

%% file: preamble.tex
\usepackage[utf8]{inputenc}
\usepackage[T1]{fontenc}
\usepackage[english]{babel}
\usepackage[margin=1in]{geometry}

\usepackage{amsmath}
\usepackage{amsthm}
\usepackage{amsfonts}
\usepackage{amssymb}
\usepackage{mathtools}
\usepackage{tikz}
\usepackage{nicematrix}
\usepackage{leftindex}
\usepackage{faktor}
\usepackage{subcaption}
\usepackage{floatpag}

\usepackage{csquotes}
\usepackage{accents}
\usepackage[babel]{microtype}
\usepackage{enumerate}
\usepackage{lipsum}

\usepackage{graphicx}
\usepackage{xcolor}
\usepackage{hyperref}
\usepackage[textsize = scriptsize, color=green]{todonotes}
\usetikzlibrary{cd,arrows.meta,math,positioning}
\usepackage{standalone}

\usepackage[backend=biber,style=numeric,  eprint=false, sorting=nyt,giveninits=true]{biblatex}
\AtEveryBibitem{%
\ifentrytype{article}{%
\clearfield{url}%
\clearfield{urldate}%
}{}
}
\AtEveryBibitem{%
\ifentrytype{book}{%
\clearfield{url}%
\clearfield{urldate}%
}{}
}

\newcommand{\R}{\mathbb{R}}
\newcommand{\C}{\mathbb{C}}
\newcommand{\CP}{\mathbb{C}\mathrm{P}}
\newcommand{\Z}{\mathbb{Z}}

\newcommand{\OO}{\mathcal{O}}

\newcommand{\aff}{\mathbb{A}}

\newcommand{\del}{\partial}
\newcommand{\delbar}{{\bar\partial}}
\newcommand{\iso}{\cong}
\newcommand{\inv}{{-1}}

\newcommand{\proj}{\mathbb{P}}
\newcommand{\dfn}{\coloneqq}
\newcommand{\nfd}{\eqqcolon}

\let\Re\relax
\DeclareMathOperator{\Re}{\mathrm{Re}}
\let\Im\relax
\DeclareMathOperator{\Im}{\mathrm{Im}}
\newcommand{\hirz}[1]{\mathbb{F}_{#1}}
\newcommand{\calT}{\mathcal{T}}

\newcommand{\cpxTorus}[1]{\mathbb{T}^{#1}}

\DeclareMathOperator{\Ima}{Im}

\newcommand{\trans}[1]{\leftindex^{t}{#1}}

\DeclareMathOperator{\GL}{GL}

\let\svthefootnote\thefootnote
\newcommand\freefootnote[1]{%
  \let\thefootnote\relax%
  \footnotetext{#1}%
  \let\thefootnote\svthefootnote%
}

\newtheorem{theorem}{Theorem}
\newtheorem{proposition}{Proposition}[section]

\theoremstyle{definition}
\newtheorem{definition}{Definition}
\newtheorem{remark}[proposition]{Remark}
\theoremstyle{remark}

\newtheoremstyle{named}{}{}{\itshape}{}{\bfseries}{.}{.5em}{#1 \thmnote{#3}}
\theoremstyle{named}
\newtheorem*{namedtheorem}{Theorem}

\newtheoremstyle{named_example}{}{}{}{}{\bfseries}{.}{.5em}{#1Example \thmnumber{#2}\,: \thmnote{#3}}
\theoremstyle{named_example}

\usepackage{zref-clever}
\newcommand{\cref}[1]{\zcref{#1}}
\newcommand{\Cref}[1]{\zcref[S]{#1}}

\newcommand{\labelcref}[1]{\zcref[noname]{#1}}

\zcsetup{nameinlink=false,abbrev}

\AddToHook{env/lemma/begin}{%
  \zcsetup{countertype={proposition=lemma}}}
\AddToHook{env/remark/begin}{%
  \zcsetup{countertype={proposition=remark}}}
\AddToHook{env/named_example/begin}{%
  \zcsetup{countertype={proposition=named_example}}}

\zcRefTypeSetup{named_example}{
Name-sg = Example ,
name-sg = example ,
Name-pl = Examples ,
name-pl = examples ,
}

\newsavebox{\tikzcdBox}